\newtheorem{thm}{Theorem}[section]
\newtheorem{cor}[thm]{Corollary}
\newtheorem{prop}[thm]{Proposition}
\theoremstyle{definition}
\def \varpi {\bar \omega}
\def \R {\mathbb R}
\def \H {\mathbb H}
\def \C {\mathbb{C}}
\def\norm {\mid\!\mid}
\def\today{
\space\ifcase\month\or January\or February \or
March \or April\or May \or June\or July\or August\or
September\or October\or November\or
December\fi\space
\number\day ,\space\number\year} \nonstopmode
\begin{document}


\baselineskip=17pt



\title[On quaternionic functions: I.] 
{On quaternionic functions: I.\ Local theory}

\author[P. Dolbeault]{Pierre Dolbeault}
\address{Institut de Math\'ematiques de Jussieu\\
UPMC, 4, place Jussieu 75005 Paris, France}
\email{\href{mailto:pierre.dolbeault@upmc.fr}{pierre.dolbeault@upmc.fr}}

\date{28.02.2014}

\begin{abstract}
Several sets of quaternionic functions are 
described and studied with respect to hyperholomorphy, addition and (non commutative) multiplication, on open sets of $\H$. The aim is to get a local function theory.
\end{abstract}

\subjclass[2010]{Primary 30G35; Secondary 30D30}

\keywords{quaternions, holomorphic, hypermeromorphic functions}

\maketitle
\thispagestyle{empty}
{\small\tableofcontents}

\section{Introduction}
We first recall the definition of the non commutative field $\H$ of quaternions using pairs of complex numbers and a modified Cauchy-Fueter operator (section 2) that have been introduced in \cite{CLSSS 07}. 
We will only use right multiplication; the (right) inverse of a nonzero quaternion being defined. We will consider $C^\infty$ $\H$-valued quaternionic functions defined on an open set $U$ of $\H$, whose behavior mimics the behavior of holomorphic functions near their zeroes on an open set of $\C$. If such a function does not identically vanish over $U$, it has an (algebraic) inverse which is defined almost everywhere on $U$. Finally, we describe properties of hyperholomorphic functions with respect to addition and multiplication. 

In section 3, we characterize the quaternionic functions which are, almost everywhere, hyperholomorphic and whose inverses are hyperholomorphic almost everywhere, on $U$, as the solutions of a system of two non linear PDE. We find non trivial examples of a solution, showing that the considered space of functions is significant: we will call these functions hypermeromorphic. 
There is a preliminary announcement in \cite{D13}

In section 4, we describe a subspace ${\mathcal H}_U$ of hyperholomorphic and hypermeromorphic functions defined almost everywhere on $U$, having ``good properties for addition and multiplication"; we obtain again systems of non linear PDE, and we give first results on spaces of functions of strictly positive dimension as vector spaces.

In the next paper II.\ Global theory, we will consider globalization of the introduced notions, define Hamilton 4-manifolds analogous to Riemann manifolds for $\H$ instead of $\C$, and give examples of such manifolds; our ultimate aim is to describe a class of four dimensional manifolds.

\section{Quaternions. $\H$-valued functions. Hyperholomorphic functions}
Quaternions, $\H$-valued functions have been defined in \cite{CLSSS 07}. 
\subsection {Quaternions} If $q\in\H$, then $q=z_1+z_2{\bf j}$ where $z_1, z_2\in\C$, 
hence $\H\cong\C^2\cong\R^4$ as complex or real vector space. We have:
$z_1{\bf j}={\bf j}\overline z_1$ (by computation in real coordinates); by definition, the {\it modulus} of $q$ is $\norm q\norm=({\vert z_1\vert} ^2+{\vert z_2\vert} ^2)^{\frac{1}{2}}$.

Let
* denote the {\it (right) multiplication} in $\H$:

Recall the noncommutativity of the multiplication: 

$qq'=(z_1+z_2{\bf j})*(z'_1+z'_2{\bf j})=(z_1z'_1-z_2\overline z'_2)+(z_1z'_2+z_2\overline z'_1){\bf j}=a+b{\bf j}$ 

$q'q= (z_1z'_1-z_2\overline z'_2)+(z'_1z_2+z'_2\overline z_1){\bf j}=a+ b'{\bf j}$, 

where $b=z_1z'_2+z_2\overline z'_1)$ and $b'=z'_1z_2+z'_2\overline z_1$. 

Commutativity when $q$ and $q' \in \R$.

The {\it conjugate} of $q$ is $\overline q= \overline z_1-z_2{\bf j}$. 
$q*\overline q=(z_1+z_2{\bf
j})*(\overline z_1-z_2{\bf j})=\vert z_1\vert ^2-z_1z_2{\bf j}+z_2{\bf
j}\overline z_1-z_2{\bf j}z_2{\bf j}=\vert z_1\vert ^2+\vert z_2\vert ^2=\vert q\vert$,
then: the {\it (right) inverse} of $q=z_1+z_2{\bf j}$ is: $q^{-1}=(\vert z_1\vert ^2+\vert
z_2\vert ^2)^{-1}\overline q=(\vert z_1\vert ^2+\vert
z_2\vert ^2)^{-1}(\overline z_1-z_2{\bf j})=\vert q\vert^{-1}\overline q$. Moreover: $(\vert z_1\vert ^2+\vert
z_2\vert ^2)^{-1}(\overline z_1-z_2{\bf j})*(z_1+z_2{\bf j})=1$, so the right inverse of $q^{-1}$ is $q$. 

\subsection {Quaternionic functions.}

Let $U$ be an open set of $\H\cong\C^2$ and $f\in C^\infty (U,\H)$, then 
$f=f_1+f_2{\bf j}$, where $f_1, f_2\in C^\infty (U,\C)$. The complex valued functions $f_1,f_2$ will be called the {\it components} of $f$.\smallskip

Remark that $\H$ is a real vector space in which real analysis is valid, in particular differential forms, distributions and currents are defined in $\H$. 
\smallskip
Remark that $\displaystyle\frac{\partial
f_1}{\partial\overline z_1}{\bf j}={\bf j}\frac{\partial\overline 
f_1}{\partial z_1}$ and analogous relations.

\subsection{Definitions} Let $U$ be an open neighborhood of 0 in $\H\cong\C^2$. 

\noindent
(a) {\it From now on, we will consider the quaternionic functions $f=f_1+f_2{\bf j}$ having the following properties}: 
\begin{enumerate}
\item[$(i)$] when $f_1$ and $f_2$ are not holomorphic, the set $Z(f_1)\cap Z(f_2)$ is {\it discrete} on $U$;
\item[$(ii)$] for every $q\in Z(f_1)\cap Z(f_2)$, $J_q^\alpha (.)$ denoting the {\it jet of order $\alpha$ at~$q$} \cite{M 66}, let $\displaystyle m_i=\sup_{\alpha_i} J_q^{\alpha_i}(f_i)=0$; $m_i$, $i=1,2$, is finite.
\end{enumerate}

{\it Define}: $\displaystyle m_q=\inf_i m_i$ as the {\it order of the zeroe $q$ of $f$}. 

\noindent
(b) We will also consider {\it the quaternionic functions defined almost everywhere on $U$} (i.e. outside of a subset of $U$ of Lebesgue measure 0, more precisely outside a finite set of $C^\infty$ hypersurfaces).

\subsection {Modified Cauchy-Fueter operator ${\mathcal D}$. 
Hyperholomorphic functions.} 
The modified Cauchy-Fueter operator ${\mathcal D}$ and hyperholomorphic functions have been defined in \cite{CLSSS 07,F 39}.      

For $f\in C^\infty (U,\H)$, with $f=f_1+f_2{\bf j}$, where $f_1, f_2\in C^\infty (U,\C)$, 
$$
{\mathcal D} f(q)=\frac{1}{2}\big(\frac{\partial}{\partial\overline z_1}
+{\bf j}\frac{\partial}{\partial\overline z_2}\big) f(q)=\frac{1}{2}\big({\frac{\partial
f_1}{\partial\overline z_1}-\frac{\partial\overline f_2}{\partial z_2}}\big)
(q)+{\bf j}\frac{1}{2}\big({\frac{\partial f_1}{\partial\overline
z_2}+\frac{\partial\overline f_2}{\partial z_1}}\big) (q).
$$

A function $f\in C^\infty (U,\H)$ is said to be {\it hyperholomorphic} if ${\mathcal
D}f=0$.

Characterization of the hyperholomorphic function $f$ on $U$:
\begin{equation}\label{Eq1}
{\frac{\partial
f_1}{\partial\overline z_1}-\frac{\partial\overline f_2}{\partial z_2}}=0; \ {\displaystyle\frac{\partial f_1}{\partial\overline
z_2}+\frac{\partial\overline f_2}{\partial z_1}}=0,\ {\rm on} \ U.
\end{equation}

The conditions: $f_1$ is holomorphic and $f_2$ is holomorphic are equivalent.
So {\it holomorphic functions with values in $\C$ will be identified with hyperholomorphic functions $f$ such that $f_2=0$.}

\begin{prop}The set ${\mathcal H}$ of hyperholomorphic functions such that the sum of two of them satisfies the above conditions $(i), (ii)$ is an $\H$-right vector space.
\end{prop}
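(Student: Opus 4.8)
The plan is to check the two module-theoretic ingredients in turn: that $(\mathcal{H},+)$ is an abelian group, and that right multiplication by a constant quaternion is a well-defined operation obeying the right-module axioms. The abelian-group structure is almost free, since commutativity and associativity of $+$, the zero function, and additive inverses all live pointwise in $\H$; the only real content is closure. Hyperholomorphy passes to sums because $\mathcal{D}$ is $\R$-linear, so $\mathcal{D}(f+g)=\mathcal{D}f+\mathcal{D}g=0$, while the demand that $f+g$ still obey $(i)$ and $(ii)$ is exactly the hypothesis encoded in the definition of $\mathcal{H}$. The function $-f$ has the same common zero set and the same jet orders (up to sign) as $f$, hence inherits $(i)$ and $(ii)$; the zero function satisfies them trivially.

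The core of the proof is closure under right scalar multiplication, and I would begin with the identity $\mathcal{D}(f\lambda)=(\mathcal{D}f)\lambda$ for constant $\lambda\in\H$. Right multiplication $R_\lambda\colon q\mapsto q\lambda$ is an $\R$-linear endomorphism of $\H\cong\R^4$ with constant coefficients, so it commutes with $\partial/\partial\overline z_1$ and $\partial/\partial\overline z_2$; applying the definition of $\mathcal{D}$ and using associativity of the quaternion product to slide ${\bf j}$ past $R_\lambda$ gives
\[
\mathcal{D}(f\lambda)=\frac12\Big(\frac{\partial f}{\partial\overline z_1}+{\bf j}\,\frac{\partial f}{\partial\overline z_2}\Big)\lambda=(\mathcal{D}f)\lambda .
\]
This is exactly the point at which \emph{right} multiplication is indispensable: since $\mathcal{D}$ carries a \emph{left} factor ${\bf j}$, a left scalar multiplication would require commuting ${\bf j}$ with $\lambda$, which fails in $\H$. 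Consequently $f\in\mathcal{H}$ forces $\mathcal{D}(f\lambda)=0$, so $f\lambda$ is again hyperholomorphic.

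It remains to see that $f\lambda$ satisfies $(i)$ and $(ii)$ for $\lambda\neq0$. Writing $\lambda=\lambda_1+\lambda_2{\bf j}$, the components are $(f\lambda)_1=f_1\lambda_1-f_2\overline\lambda_2$ and $(f\lambda)_2=f_1\lambda_2+f_2\overline\lambda_1$, i.e.\ a constant $\C$-linear recombination of $(f_1,f_2)$ whose coefficient matrix has determinant $|\lambda_1|^2+|\lambda_2|^2$, the squared modulus of $\lambda$, which is nonzero; the recombination is therefore invertible. Hence $q$ is a common zero of the new components iff $f(q)\lambda=0$, iff $f(q)=0$, so $Z((f\lambda)_1)\cap Z((f\lambda)_2)=Z(f_1)\cap Z(f_2)$, and holomorphy of the pair is preserved in both directions; this yields $(i)$. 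Since $J_q^\alpha$ is linear and the recombination is a constant isomorphism, the joint order $m_q=\inf_i m_i$ is unchanged, giving $(ii)$. Finally the four right-module axioms $(f+g)\lambda=f\lambda+g\lambda$, $f(\lambda+\mu)=f\lambda+f\mu$, $f(\lambda\mu)=(f\lambda)\mu$ and $f\cdot 1=f$ hold pointwise, directly from the distributivity and associativity of the quaternion product.

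The step I expect to cost the most care is the preservation of $(ii)$: the invertibility of the coefficient matrix immediately controls the joint vanishing order $m_q$, but keeping \emph{each} $m_i$ finite requires ruling out the possibility that an invertible $C^\infty$ recombination of two functions of finite order produces a flat (infinite-order) component through exact cancellation. I would address this by exploiting the symmetry $f=(f\lambda)\lambda^{-1}$ together with the structural constraints already imposed on the members of $\mathcal{H}$, reducing the claim to the behaviour of $m_q$, which is the quantity that genuinely determines the order of the zero.
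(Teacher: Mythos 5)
Your proof is correct in substance, and it actually does far more than the paper: the paper's entire proof is one sentence asserting that for $\lambda',\lambda''\in\H$ the combination $\lambda'f'+\lambda''f''$ ``has the same properties'' as $f'$ and $f''$, with no verification of hyperholomorphy, of condition $(i)$, or of condition $(ii)$. Two comparisons are worth making. First, the paper writes the scalars on the \emph{left}; as your identity $\mathcal{D}(f\lambda)=(\mathcal{D}f)\lambda$ makes plain, it is only \emph{right} scalar multiplication that slides past the left factor ${\bf j}$ inside $\mathcal{D}$, and left multiplication by a general constant quaternion does not preserve hyperholomorphy. So your formulation is the one consistent with the claim ``$\H$-right vector space,'' and your remark about why right multiplication is indispensable is precisely the point the paper glosses over. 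Second, your analysis of the component recombination with determinant $\vert\lambda_1\vert^2+\vert\lambda_2\vert^2\neq 0$, giving $Z((f\lambda)_1)\cap Z((f\lambda)_2)=Z(f_1)\cap Z(f_2)$, is exactly the verification of $(i)$ that the paper leaves implicit.

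The one loose end you flag --- that an invertible constant recombination of two finite-order components could in principle produce a flat component by exact cancellation --- is a legitimate worry for general $C^\infty$ pairs, but your proposed fix via $f=(f\lambda)\lambda^{-1}$ only controls the joint order $m_q$, not each $m_i$ separately. The clean way to close it is ellipticity. From the hyperholomorphy system $\frac{\partial f_1}{\partial\overline z_1}=\frac{\partial\overline f_2}{\partial z_2}$, $\frac{\partial f_1}{\partial\overline z_2}=-\frac{\partial\overline f_2}{\partial z_1}$, differentiating the first by $\partial/\partial z_1$, the second by $\partial/\partial z_2$ and adding gives $\big(\frac{\partial^2}{\partial z_1\partial\overline z_1}+\frac{\partial^2}{\partial z_2\partial\overline z_2}\big)f_1=0$, and symmetrically for $f_2$: the components of a hyperholomorphic function are harmonic, hence real-analytic. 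A real-analytic function flat at a point of a connected open set vanishes identically; and a component $(f\lambda)_1\equiv 0$ forces $f_1$ proportional to $f_2$, which one checks is incompatible with hyperholomorphy unless both components are holomorphic, a case where conditions $(i)$--$(ii)$ are vacuous. So each $m_i$ is in fact preserved and your argument closes. (The remaining fragility --- that membership of $f+g$ in $\mathcal{H}$ demands good behaviour of its sums with \emph{all} other members, not just the pairwise hypothesis --- is a defect of the paper's self-referential definition, shared equally by the paper's own proof.)
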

\begin{proof}Let $f'=f'_1+f'_2{\bf j}$, $f''=f''_1+f''_2{\bf j}$ be two hyperholomorphic functions satisfying the above conditions $(i), (ii)$, then, for $\lambda', \lambda''\in {\H}, \lambda'f'+\lambda''f''$ has the same properties. \end{proof}

\begin{prop}The set ${\mathcal H}'$ of almost everywhere defined hyperholomorphic functions is an $\H$-right vector space.\end{prop}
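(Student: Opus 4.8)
The plan is to verify the two closure properties — closure under addition and under right scalar multiplication — and then note that the remaining vector-space axioms are inherited directly from quaternion arithmetic applied pointwise. The three ingredients I will use are: the additivity ($\R$-linearity) of $\mathcal{D}$; the associativity of the multiplication in $\H$, which lets a constant scalar slide to the right past the factor ${\bf j}$ occurring in $\mathcal{D}$; and the elementary bookkeeping remark that a finite union of $C^\infty$ hypersurfaces is again a finite union of $C^\infty$ hypersurfaces, hence still of Lebesgue measure $0$. Note that, in contrast with the previous proposition on $\mathcal{H}$, the defining conditions $(i)$ and $(ii)$ play no role here, so only the equation $\mathcal{D}f=0$ on the domain of definition has to be propagated.

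For addition, if $f'$ is hyperholomorphic off a measure-zero set $E'$ and $f''$ off $E''$, then $f'+f''$ is defined on $U\setminus(E'\cup E'')$, and $E'\cup E''$ is again a finite union of $C^\infty$ hypersurfaces. Since $\partial/\partial\overline z_1$ and $\partial/\partial\overline z_2$ are additive, so is $\mathcal{D}$, whence $\mathcal{D}(f'+f'')=\mathcal{D}f'+\mathcal{D}f''=0$ on this common domain, and $f'+f''\in\mathcal{H}'$. For right scalar multiplication by a constant $\lambda\in\H$, the exceptional set of $f\lambda$ is the same $E$ as that of $f$, and the one computation with content is that, by associativity in $\H$, the constant $\lambda$ passes to the right through the left factor ${\bf j}$:
$$
\mathcal{D}(f\lambda)=\frac{1}{2}\Big(\frac{\partial f}{\partial\overline z_1}\lambda+{\bf j}\frac{\partial f}{\partial\overline z_2}\lambda\Big)=\Big(\frac{1}{2}\big(\frac{\partial f}{\partial\overline z_1}+{\bf j}\frac{\partial f}{\partial\overline z_2}\big)\Big)\lambda=(\mathcal{D}f)\lambda=0 .
$$
Hence $f\lambda\in\mathcal{H}'$.

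The remaining axioms (commutativity and associativity of addition, the zero function, additive inverses $-f=f\cdot(-1)$, and the right-module identities $(f\lambda)\mu=f(\lambda\mu)$, $f(\lambda+\mu)=f\lambda+f\mu$, $(f'+f'')\lambda=f'\lambda+f''\lambda$, and $f\cdot 1=f$) all hold at each point of the relevant common domain, since they hold pointwise in $\H$; identifying functions that agree off a measure-zero set, they hold in $\mathcal{H}'$. I expect no serious obstacle: the only genuine point is the associativity observation in the displayed line, which is precisely what forces the structure to be a \emph{right} and not a left vector space — for left multiplication one would instead meet ${\bf j}\lambda$, and since $\H$ is noncommutative $\lambda$ would fail to commute through $\mathcal{D}$. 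The domain bookkeeping is routine, being nothing more than the stability of finite unions of hypersurfaces under the two operations.
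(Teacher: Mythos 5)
Your proof is correct, and it supplies an argument the paper never actually writes down: this proposition is stated with no proof at all, the only model being the one-line proof of the preceding proposition on ${\mathcal H}$, which merely asserts closure of $\lambda'f'+\lambda''f''$. Two remarks on the comparison. First, your treatment is more careful than the paper's on the one point where care is genuinely needed, namely the side on which scalars act. The identity ${\mathcal D}(f\lambda)=({\mathcal D}f)\lambda$ holds because the constant $\lambda$ slides, by associativity of $\H$, to the right past the left factor ${\bf j}$ --- and also past the factor $i$ hidden inside $\partial/\partial\overline z_k$, which is what justifies the very first equality $\partial(f\lambda)/\partial\overline z_k=(\partial f/\partial\overline z_k)\lambda$ in your display; if you prefer, this can be verified on components, since $(f\lambda)_1=f_1\lambda_1-f_2\overline\lambda_2$ and $(f\lambda)_2=f_1\lambda_2+f_2\overline\lambda_1$ with $\lambda_1,\lambda_2$ complex constants. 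Your closing remark that left scalars would fail is also correct and can be made concrete: $f=\overline z_1\overline z_2+(\vert z_2\vert^2-\vert z_1\vert^2){\bf j}$ is hyperholomorphic, yet ${\bf j}f=(\vert z_1\vert^2-\vert z_2\vert^2)+z_1z_2{\bf j}$ is not, because $\partial(\vert z_1\vert^2-\vert z_2\vert^2)/\partial\overline z_1-\partial(\overline z_1\overline z_2)/\partial z_2=z_1\not\equiv 0$. This is exactly why the statement concerns a \emph{right} vector space, and it shows that the scalars-on-the-left notation in the paper's proof of the preceding proposition is at best an abuse. Second, the domain bookkeeping (a union of two finite sets of $C^\infty$ hypersurfaces is again one, hence of Lebesgue measure zero) is the only point genuinely specific to ${\mathcal H}'$ as opposed to ${\mathcal H}$, and you handle it correctly.
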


\begin{prop}\label{Proposition 2.3.2.}Let $f'$, $f''$ be two almost everywhere defined hyperholomorphic functions. Then, their product $f'*f''$ satisfies:
$$
{\mathcal D} (f'*f'')={\mathcal D}f'*{\bf j}f''+\big(f'(\frac{\partial }{\partial\overline z_1})+\overline f'{\bf j}\frac{\partial }{\partial\overline z_2}\big)f''
$$
\end{prop}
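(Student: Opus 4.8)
The plan is to treat the identity as a Leibniz-type rule for the first-order operator ${\mathcal D}=\frac{1}{2}\big(\frac{\partial}{\partial\overline z_1}+{\bf j}\frac{\partial}{\partial\overline z_2}\big)$, carried out directly on components. First I would expand the product by the multiplication formula of Section~2, writing $f'*f''=(f'*f'')_1+(f'*f'')_2{\bf j}$ with $(f'*f'')_1=f'_1f''_1-f'_2\overline{f''_2}$ and $(f'*f'')_2=f'_1f''_2+f'_2\overline{f''_1}$. Applying $\frac{\partial}{\partial\overline z_1}$ and ${\bf j}\frac{\partial}{\partial\overline z_2}$ to these $\C$-valued components and using the ordinary Leibniz rule, each term splits into a piece in which the derivative falls on a component of $f'$ and a piece in which it falls on a component of $f''$. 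The hyperholomorphy hypotheses, together with the discreteness conditions $(i),(ii)$, serve mainly to guarantee that $f'*f''$ and its image under ${\mathcal D}$ are defined almost everywhere, so that the pointwise identity makes sense on $U$.

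Next I would regroup the terms according to where the derivative lands. The pieces in which the derivative falls on $f'$ recombine, using associativity of $*$ (which lets a factor ${\bf j}$ be pulled to the left of the whole product), into the summand built from ${\mathcal D}f'$; in particular, since $f'$ is hyperholomorphic one has ${\mathcal D}f'=0$, so this summand drops out if one only wants the reduced form. The pieces in which the derivative falls on $f''$ must be reassembled into the operator $\big(f'\frac{\partial}{\partial\overline z_1}+\overline{f'}{\bf j}\frac{\partial}{\partial\overline z_2}\big)f''$. The passage from $f'$ to $\overline{f'}$ in the second of these terms is produced by the relations $\frac{\partial f_1}{\partial\overline z_1}{\bf j}={\bf j}\frac{\partial\overline f_1}{\partial z_1}$ recorded before the Definitions, which move a factor ${\bf j}$ across a complex component at the cost of conjugating it.

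The main obstacle is the non-commutativity concealed in the Wirtinger derivatives. Since $\frac{\partial}{\partial\overline z_k}=\frac{1}{2}\big(\frac{\partial}{\partial x_k}+i\frac{\partial}{\partial y_k}\big)$, the scalar $i$ enters, and although $i$ can be moved freely past the left factor $f'$ (it acts as left multiplication and $*$ is associative), it does not commute with the ${\bf j}$ inside the second factor. Consequently the naive product rule $\frac{\partial}{\partial\overline z_k}(f'*f'')=\big(\frac{\partial}{\partial\overline z_k}f'\big)*f''+f'*\big(\frac{\partial}{\partial\overline z_k}f''\big)$ fails on the second factor, and the resulting discrepancy is precisely what forces the conjugation $f'\to\overline{f'}$ and the redistribution of the ${\bf j}$ factors in the stated formula. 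Keeping careful account of these rearrangements, together with the normalization $\frac{1}{2}$ in ${\mathcal D}$, is the delicate part; once the bookkeeping is done, the two groups of terms match the two summands of the asserted identity.
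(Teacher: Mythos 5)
Your plan coincides with the paper's own proof: expand $f'*f''$ in components, apply ${\mathcal D}$ via the ordinary Leibniz rule, and regroup the terms according to whether the derivative falls on a component of $f'$ (yielding ${\mathcal D}f'*{\bf j}f''$) or of $f''$ (yielding $\big(f'\frac{\partial }{\partial\overline z_1}+\overline {f'}{\bf j}\frac{\partial }{\partial\overline z_2}\big)f''$), with the conjugations produced by the rule ${\bf j}w=\overline w\,{\bf j}$ for complex $w$ --- which is the same fact as your observation that $i$ anticommutes with ${\bf j}$. The approach and the delicate point you identify are exactly those of the paper, so the proposal is correct.
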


\begin{proof}
$f'=f'_1+f'_2{\bf j}$, $f''=f''_1+f''_2{\bf j}$ be two hyperholomorphic functions. 
\medskip

We have: $f'*f''=(f'_1+f'_2{\bf j})(f''_1+f''_2{\bf j})
= f'_1f''_1- f'_2\overline f''_2+(f'_1f''_2+f'_2\overline f''_1){\bf j}$
\smallskip

\noindent Compute
$$
\frac{1}{2}\big(\frac{\partial}{\partial\overline z_1}
+{\bf j}\frac{\partial}{\partial\overline z_2}\big) \big(f'_1f''_1- f'_2\overline f''_2+(f'_1f''_2+f'_2\overline f''_1){\bf j}\big)
$$
By derivation of the first factors of the sum $f'*f''$, we get the first term: 
$$
\frac{1}{2}\big(\frac{\partial f'_1}{\partial\overline z_1}
+{\bf j}\frac{\partial f'_1}{\partial\overline z_2}\big)(f''_1+f''_2{\bf j}) +\frac{1}{2}\big(\frac{\partial f'_2}{\partial\overline z_1}
+{\bf j}\frac{\partial f'_2}{\partial\overline z_2}\big) {\bf j}{\bf j}(\overline f''_2-\overline f''_1{\bf j}) 
$$
$$
=\frac{1}{2}\big(\frac{\partial f'_1}{\partial\overline z_1}
+{\bf j}\frac{\partial f'_1}{\partial\overline z_2}\big)(f''_1+f''_2{\bf j}) +\frac{1}{2}\big(\frac{\partial f'_2{\bf j}}{\partial\overline z_1}
+{\bf j}\frac{\partial f'_2{\bf j}}{\partial\overline z_2}\big) {\bf j}(f''_2{\bf j}+f''_1)= {\mathcal D}f'*{\bf j}f''
$$
By derivation in 
$$
\frac{1}{2}\big(\frac{\partial}{\partial\overline z_1}
+{\bf j}\frac{\partial}{\partial\overline z_2}\big) \big(f'_1f''_1+f'_2{\bf j} f''_2{\bf j}+(f'_1f''_2{\bf j}+f'_2{\bf j}f''_1)\big)
$$
of the second factors of the sum $f'*f''$, we get the second term (up to factor~$\frac{1}{2}$):

\begin{multline*}
f'_1\frac{\partial f''_1}{\partial\overline z_1}+\overline f'_1{\bf j}\frac{\partial f''_1}{\partial\overline z_2}+f'_1\frac{\partial f''_2}{\partial\overline z_1}{\bf j}+\overline f'_1{\bf j}\frac{\partial f''_2}{\partial\overline z_2}{\bf j}
\\
+f'_2{\bf j}\frac{\partial f''_2}{\partial\overline z_1}{\bf j}+\overline f'_2{\bf j}\frac{\partial f''_2}{\partial\overline z_2}+f'_2{\bf j}\frac{\partial f''_1}{\partial\overline z_1}+\overline f'_2{\bf j}{\bf j}\frac{\partial f''_1}{\partial\overline z_2}
\\
=(f'_1+f'_2{\bf j})(\frac{\partial }{\partial\overline z_1})(f''_1+ f''_2{\bf j}) +(\overline f'_1+\overline f'_2{\bf j}){\bf j}\frac{\partial }{\partial\overline z_2}(f''_1+ f''_2{\bf j})
\\
= \big((f'_1+f'_2{\bf j})(\frac{\partial }{\partial\overline z_1})+(\overline f'_1+\overline f'_2{\bf j}){\bf j}\frac{\partial }{\partial\overline z_2}\big)(f''_1+ f''_2{\bf j})
\\
= \big(f'(\frac{\partial }{\partial\overline z_1})+\overline f'{\bf j}\frac{\partial }{\partial\overline z_2}\big)f''.
\end{multline*}
\end{proof}

If the components of $f'$ and $f''$ are real, the second term is: 
$$
\frac{1}{2}(f'_1+f'_2{\bf j})(\frac{\partial }{\partial\overline z_1}+{\bf j}\frac{\partial }{\partial\overline z_2})(f''_1+ f''_2{\bf j})=f'*
{\mathcal D}f''
$$
i.e.
\begin{cor}\label{Proposition 2.3.3} {\it The set ${\mathcal H}_{\R}$ of almost everywhere defined hyperholomorphic functions whose components are real is an $\R$-right algebra.} 
\end{cor}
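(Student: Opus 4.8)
The plan is to verify the three ingredients of an $\R$-right algebra structure on $\mathcal{H}_\R$: that it is an $\R$-vector space, that it is stable under the product $*$, and that $*$ is associative and $\R$-bilinear. The vector-space part should come essentially for free: $\mathcal{H}_\R$ is the subset of the $\H$-right vector space $\mathcal{H}'$ of almost everywhere defined hyperholomorphic functions whose two components $f_1,f_2$ are both real-valued. Restricting the scalars from $\H$ to the central subfield $\R$, a combination $\lambda'f'+\lambda''f''$ with $\lambda',\lambda''\in\R$ is again hyperholomorphic by linearity of $\mathcal{D}$ and still has real components, so $\mathcal{H}_\R$ is an $\R$-subspace; its domain of definition, being the complement of a finite union of the exceptional hypersurfaces, is again of the admitted type.

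The crux is stability under multiplication, and here I would feed two elements $f',f''\in\mathcal{H}_\R$ into the product formula of Proposition \ref{Proposition 2.3.2.}. Since $f'$ is hyperholomorphic, $\mathcal{D}f'=0$ almost everywhere, so the first summand $\mathcal{D}f'*{\bf j}f''$ vanishes on the common domain. For the second summand I would invoke the reality of the components of $f'$: as recorded just above the statement, under this hypothesis $\big(f'\,\partial/\partial\overline z_1+\overline f'{\bf j}\,\partial/\partial\overline z_2\big)f''$ collapses to $f'*\mathcal{D}f''$, which is zero because $f''$ is hyperholomorphic. Hence $\mathcal{D}(f'*f'')=0$ almost everywhere, i.e. $f'*f''$ is hyperholomorphic. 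To confirm that it lies in $\mathcal{H}_\R$ and not merely in $\mathcal{H}'$, I would read off from $f'*f''=(f'_1f''_1-f'_2\overline{f''_2})+(f'_1f''_2+f'_2\overline{f''_1}){\bf j}$ that, when all four components are real, the components $f'_1f''_1-f'_2f''_2$ and $f'_1f''_2+f'_2f''_1$ of the product are again real.

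It then remains to record the ring axioms, which require no new work: associativity and distributivity of $*$ hold pointwise because $\H$ is an associative algebra, and $\R$-bilinearity follows from the centrality of $\R$ in $\H$, giving $(\lambda f')*f''=\lambda(f'*f'')=f'*(\lambda f'')$ for $\lambda\in\R$. I expect the one genuine obstacle to be the vanishing of the second summand of the product formula: it is exactly the reality of the components of $f'$ that permits its rewriting as $f'*\mathcal{D}f''$, and without that hypothesis the summand does not factor through $\mathcal{D}f''$ and has no reason to vanish. This is precisely why one must pass from $\mathcal{H}'$ to the subalgebra $\mathcal{H}_\R$ in order to gain closure under multiplication.
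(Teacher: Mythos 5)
Your proposal is correct and follows the paper's own route: it feeds two elements of ${\mathcal H}_{\R}$ into the product formula of Proposition \ref{Proposition 2.3.2.}, kills the first term by $\mathcal{D}f'=0$, and uses exactly the paper's remark that reality of the components collapses the second term to $f'*\mathcal{D}f''=0$. Your additional verifications (reality of the product's components, the $\R$-subspace structure, and the pointwise ring axioms) are details the paper leaves implicit, but they introduce no new idea beyond the paper's argument.
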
 

\subsection{Remark} The hyperholomorphic functions $f=f_1+f_2{\bf j}$ such that $f_2=0$ satisfy: 

$\displaystyle {\mathcal D} f(q)=\frac{1}{2}\big(\frac{\partial}{\partial\overline z_1}
+{\bf j}\frac{\partial}{\partial\overline z_2}\big) f(q)=\frac{1}{2}\frac{\partial
f_1}{\partial\overline z_1}(q)+{\bf j}\frac{1}{2}\frac{\partial f_1}{\partial\overline
z_2} (q)$ i.e. are holomorphic in $(z_1,z_2)$ with values in $\C$. 
\smallskip
More generally, we will consider {\it their product with a quaternion. Their set is the $\H$-right algebra of holomorphic functions.}

\section {Almost everywhere hyperholomorphic functions whose inverses are almost everywhere hyperholomorphic}

\subsection {Null set and inverse of a quaternionic function}

Let $f=f_1+f_2{\bf j}$ be a quaternionic function on $U$. 
The null set $Z(f)$ satisfies: $f_1=0; f_2=0$, then $Z(f)$ is of measure 0 in $U$. 

Ex.: $f_1=\overline z_1$; $f_2=\overline z_2$, then
$Z(f)=\{0\}$. 

Note that if $f$ is holomorphic, then, $f_2\equiv 0$
and $Z(f)$ is a complex hypersurface in $\C^2$.

We call {\it inverse} of a function $f: q\mapsto f(q)$, the function, defined almost everywhere on $U$: $f^{-1}: q\mapsto f(q)^{-1}$; then: $f^{-1}=\vert f\vert^{-1}\overline f$, where $\overline f$ is the (quaternionic) conjugate of $f$, and $f^{-1}=(\vert f_1\vert ^2+\vert f_2\vert ^2)^{-1}(\overline f_1-f_2{\bf j})$.

\subsection {Inversion and hyperholomorphy} 

{\it Assume $f$ to be hyperholomorphic and $Z(f)=\{0\}$, then
$\displaystyle\frac{1}{f}$ is not necessarily hyperholomorphic outside~$\{0\}$}. 

Ex.: $f=\overline z_1+\overline z_2{\bf j}$, then
$$
\frac{1}{f}=(z_1\overline z_1+z_2\overline z_2)^{-1}(z_1-\overline z_2{\bf j}); \hskip 2mm{\mathcal D}(\frac{1}{f})\not =0, 
$$

\subsubsection{Definition} Behavior of $f^{-1}$ at $q\in Z(f)=Z(f_1)\cap Z(f_2)$. Denoting $J_q^\alpha(.)$ the {\it jet of order $\alpha$ at $q$} \cite{M 66}, let $\displaystyle n_1= \sup J_q^\alpha (\vert f\vert \overline f_1^{-1})$; $n_2=\sup J_q^\alpha (\vert f\vert f_2^{-1})$ 

{\it Define}: $\displaystyle n_q=\sup_i n_i$, $i=1,2$ as the {\it order of the pole $q$ of $f^{-1}$}.

\subsubsection {Inverse of a holomorphic function}

Let $f= f_1+0{\bf j}$ be a hyperholomorphic function. Then $f^{-1} =f_1^{-1}+0{\bf j}$ and $f^{-1}$ is hyperholomorphic outside of the complex hypersurface $Z(f)$. Remark that $Z(f)$ is a subvariety of complex dimension 1, then of measure zero, in $U$.

We will consider {\it almost everywhere defined} hyperholomorphic functions on $U$. Ex.: holomorphic, meromorphic functions.

Let $f=f_1+f_2{\bf j}$ be a hyperholomorphic function and $g=g_1+g_2{\bf j}=\displaystyle \vert f\vert^{-1}(\overline f_1-f_2{\bf j})$ its inverse; so $g_1=\vert f\vert^{-1}\overline f_1$; $g_2=-\vert f\vert^{-1}f_2$, 
where $\vert f\vert=(f_1\overline f_1+f_2\overline f_2)$.
\subsubsection{A chacracterisation}

\begin{prop}\label{P31} The following conditions are equivalent
\begin{enumerate}
\item[$(i)$] the function $f=f_1+f_2{\bf j}$ and its right inverse are hyperholomorphic, when they are defined: 
\item[$(ii)$] we have the equations: 
$$
(\overline f_1- f_1)\frac{\partial\overline f_1}{\partial z_1}-\overline f_2\frac{\partial f_2}{\partial z_1}-f_2\frac{\partial\overline f_1}{\partial \overline z_2}=0, 
$$ 
$$
\overline f_2\frac{\partial f_1}{\partial z_1}+ \frac{\partial\overline f_2}{\partial z_1}(\overline f_1-f_1)-f_2\frac{\partial\overline f_2}{\partial\overline z_2}=0. 
$$
\end{enumerate}
\end{prop}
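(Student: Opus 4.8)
The plan is to make condition $(i)$ explicit and reduce it to $(ii)$ by computing $\mathcal D(f^{-1})$. Since $f$ is hyperholomorphic it satisfies \eqref{Eq1}, and, as recalled just before the statement, its inverse is $g=f^{-1}=\vert f\vert^{-1}\overline f$ with components $g_1=\vert f\vert^{-1}\overline f_1$ and $g_2=-\vert f\vert^{-1}f_2$, where $\vert f\vert=f_1\overline f_1+f_2\overline f_2$ is real and $>0$ off $Z(f)$. Applying the criterion \eqref{Eq1} to $g$, the hyperholomorphy of $g$ is equivalent (where $g$ is defined) to the two complex equations $\frac{\partial g_1}{\partial\overline z_1}-\frac{\partial\overline g_2}{\partial z_2}=0$ and $\frac{\partial g_1}{\partial\overline z_2}+\frac{\partial\overline g_2}{\partial z_1}=0$. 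So $(i)$ amounts to this pair; I will write $E_1,E_2$ for the left-hand sides of the two equations in $(ii)$ and aim to express the pair in terms of $E_1,E_2$.

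First I would substitute $g_1=\vert f\vert^{-1}\overline f_1$, $\overline g_2=-\vert f\vert^{-1}\overline f_2$ and differentiate, using that $\vert f\vert$ is real, so $\frac{\partial}{\partial w}\vert f\vert^{-1}=-\vert f\vert^{-2}\frac{\partial}{\partial w}\vert f\vert$ for each coordinate $w$. Multiplying the two equations by the nonzero factor $\vert f\vert^{2}$ clears denominators and produces two expressions $G_1,G_2$ polynomial in $f_1,f_2$ and their first derivatives. Expanding $\frac{\partial}{\partial w}\vert f\vert$ from $\vert f\vert=f_1\overline f_1+f_2\overline f_2$, the terms $\vert f\vert\frac{\partial\overline f_i}{\partial w}$ cancel against the diagonal part of $\overline f_i\frac{\partial}{\partial w}\vert f\vert$, reducing $G_1$ and $G_2$ to homogeneous cubic expressions.

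The key step is to invoke \eqref{Eq1} together with its conjugates, trading $\frac{\partial f_1}{\partial\overline z_1},\frac{\partial f_1}{\partial\overline z_2}$ and the corresponding conjugate derivatives of $f_2$ for derivatives of $\overline f_2$ and $f_2$. After these substitutions, $G_2$ involves only $z_1,\overline z_2$-derivatives (as do $E_1,E_2$), while $G_1$ involves $\overline z_1,z_2$-derivatives (matching the conjugates $\overline E_1,\overline E_2$); regrouping and factoring out the recurring $\overline f_1-f_1$, I expect the two cubics to collapse to the identities $G_1=\overline f_1\,\overline E_1+\overline f_2\,\overline E_2$ and $G_2=\overline f_1 E_2-\overline f_2 E_1$. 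Granting these, the equivalence is pure linear algebra: $G_1=G_2=0$ gives, after conjugating the first relation, that $(E_1,E_2)$ lies in the kernel of $\left(\begin{smallmatrix}f_1&f_2\\-\overline f_2&\overline f_1\end{smallmatrix}\right)$, whose determinant is $\vert f_1\vert^2+\vert f_2\vert^2=\vert f\vert\neq0$ off $Z(f)$; hence $E_1=E_2=0$, which is $(ii)$. The converse $(ii)\Rightarrow(i)$ is immediate from the same identities, and every step reverses because $\vert f\vert\neq0$ where $g$ is defined.

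The main obstacle is deriving those two identities. Since $G_1,G_2$ and $E_1,E_2$ do not match term-by-term, the reduction genuinely consumes \eqref{Eq1} and its conjugates rather than being a rearrangement, and the bookkeeping of conjugations ($\frac{\partial\overline h}{\partial\overline w}=\overline{\partial h/\partial w}$ and ${\bf j}c=\overline c\,{\bf j}$) is where slips are easiest. I would therefore first record the four relations coming from \eqref{Eq1} and its conjugates, then bring each cubic to a normal form in the unconstrained ($z_1,\overline z_2$) derivatives before comparing coefficients; once the coefficient pattern $\overline f_1,\overline f_2$ emerges, the determinant computation above finishes the argument.
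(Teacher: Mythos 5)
Your proposal is correct and takes essentially the same route as the paper: compute $\mathcal{D}(f^{-1})$ for $f^{-1}=\vert f\vert^{-1}\overline f$, clear the denominator $\vert f\vert^{2}$, simplify with \eqref{Eq1} and its conjugates, and pass between the resulting cubic system and the system $(ii)$ by an invertible linear step whose determinant is $\vert f\vert=f_1\overline f_1+f_2\overline f_2\neq 0$. The identities you anticipate do hold, and they are exactly the inverse form of the paper's combinations (the paper multiplies the two cubic equations by $\overline f_1,-f_2$ and by $\overline f_2,f_1$ and sums, obtaining $\vert f\vert$ times each equation of $(ii)$), so the two arguments differ only in bookkeeping.
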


\begin{proof}
Let $f=f_1+f_2{\bf j}$ be a hyperholomorphic function and $g=g_1+g_2{\bf j}=\displaystyle \vert f\vert^{-1}(\overline f_1-f_2{\bf j})$ its inverse; so $g_1=\vert f\vert^{-1}\overline f_1$; $g_2=-\vert f\vert^{-1}f_2$, 
where $\vert f\vert=(f_1\overline f_1+f_2\overline f_2)$.
\begin{eqnarray*}
{\mathcal D}g(q)&=&\frac{1}{2}\big(\frac{\partial}{\partial\overline z_1}
+{\bf j}\frac{\partial}{\partial\overline z_2}\big) g(q)
=\frac{1}{2}\big({\frac{\partial
g_1}{\partial\overline z_1}-\frac{\partial\overline g_2}{\partial z_2}}\big)
(q)+{\bf j}\frac{1}{2}\big({\frac{\partial g_1}{\partial\overline
z_2}+\frac{\partial\overline g_2}{\partial z_1}}\big) (q)
\\
\frac{\partial g_1}{\partial\overline z_1}&=& \vert f\vert^{-1}\frac{\partial\overline f_1}{\partial \overline z_1}-\vert f\vert^{-2} \overline f_1\big(\frac{\partial f_1}{\partial \overline z_1}\overline f_1+f_1\frac{\partial\overline f_1}{\partial\overline z_1}+\frac{\partial f_2}{\partial \overline z_1}\overline f_2+f_2\frac{\partial\overline f_2}{\partial \overline z_1}\big)
\\
-\frac{\partial\overline g_2}{\partial z_2}&=& \vert f\vert^{-1}\frac{\partial\overline f_2}{\partial z_2}-\vert f\vert^{-2} \overline f_2\big(\frac{\partial f_1}{\partial z_2}\overline f_1+f_1\frac{\partial\overline f_1}{\partial z_2}+\frac{\partial f_2}{\partial z_2}\overline f_2+f_2\frac{\partial\overline f_2}{\partial z_2}\big)
\\
\frac{\partial g_1}{\partial\overline z_2}&=& \vert f\vert^{-1}\frac{\partial\overline f_1}{\partial \overline z_2}-\vert f\vert^{-2} \overline f_1\big(\frac{\partial f_1}{\partial \overline z_2}\overline f_1+f_1\frac{\partial\overline f_1}{\partial \overline z_2}+\frac{\partial f_2}{\partial \overline z_2}\overline f_2+f_2\frac{\partial\overline f_2}{\partial \overline z_2}\big)
\\
\frac{\partial\overline g_2}{\partial z_1}&=& - \vert f\vert^{-1}\frac{\partial\overline f_2}{\partial z_1}+\vert f\vert^{-2} \overline f_2\big(\frac{\partial f_1}{\partial z_1}\overline f_1+f_1\frac{\partial\overline f_1}{\partial z_1}+\frac{\partial f_2}{\partial z_1}\overline f_2+f_2\frac{\partial\overline f_2}{\partial z_1}\big)
\end{eqnarray*}
\begin{eqnarray*}
&&\hspace*{-36pt}2\vert f\vert^2{\mathcal D}g
\\
&=&(f_1\overline f_1+f_2\overline f_2)(\frac{\partial\overline f_1}{\partial \overline z_1}+\frac{\partial\overline f_2}{\partial z_2})-\overline f_1 f_1\frac{\partial \overline f_1}{\partial \overline z_1}-\overline f_1\overline f_1\frac{\partial f_1}{\partial \overline z_1}-\overline f_1f_2\frac{\partial\overline f_2}{\partial \overline z_1}-\overline f_1\overline f_2\frac{\partial f_2}{\partial \overline z_1}
\\
&&-\overline f_1\overline f_2\frac{\partial f_1}{\partial z_2}- f_1\overline f_2\frac{\partial\overline f_1}{\partial z_2}-
\overline f_2\overline f_2\frac{\partial f_2}{\partial z_2}- f_2\overline f_2\frac{\partial\overline f_2}{\partial z_2}
\\
&&+{\bf j}\Big((f_1\overline f_1+f_2\overline f_2)(\frac{\partial\overline f_1}{\partial \overline z_2}-\frac{\partial\overline f_2}{\partial z_1})-\overline f_1\overline f_1\frac{\partial f_1}{\partial \overline z_2}-\overline f_1 f_1\frac{\partial \overline f_1}{\partial \overline z_2}-\overline f_1\overline f_2\frac{\partial f_2}{\partial \overline z_2}-\overline f_1 f_2\frac{\partial\overline f_2}{\partial \overline z_2}
\\
&&+\overline f_1\overline f_2\frac{\partial f_1}{\partial z_1}+ f_1\overline f_2\frac{\partial\overline f_1}{\partial z_1}+
\overline f_2\overline f_2\frac{\partial f_2}{\partial z_1}+ f_2\overline f_2\frac{\partial\overline f_2}{\partial z_1}\Big)\end{eqnarray*}
Use the fact: $f$ is hyperholomorphic: 
$$
{\displaystyle\frac{\partial
f_1}{\partial\overline z_1}-\frac{\partial\overline f_2}{\partial z_2}}=0; \ {\displaystyle\frac{\partial f_1}{\partial\overline
z_2}+\frac{\partial\overline f_2}{\partial z_1}}=0 \leqno{\eqref{Eq1}}
$$ 
\begin{multline*}
2\vert f\vert^2{\mathcal D}g= f_1\overline f_1\frac{\partial\overline f_2}{\partial z_2} +f_2\overline f_2\frac{\partial\overline f_1}{\partial \overline z_1} -\overline f_1\overline f_1\frac{\partial f_1}{\partial \overline z_1}-\overline f_1f_2\frac{\partial\overline f_2}{\partial \overline z_1}-\overline f_1\overline f_2\frac{\partial f_1}{\partial z_2}-
\overline f_2\overline f_2\frac{\partial f_2}{\partial z_2}+\overline f_2\frac{\partial f_2}{\partial \overline z_1}(f_1-\overline f_1)+
\\+{\bf j}\Big(+f_2\overline f_2\frac{\partial\overline f_1}{\partial \overline z_2}-\overline f_1\overline f_2\frac{\partial f_2}{\partial \overline z_2}-\overline f_1 f_2\frac{\partial\overline f_2}{\partial \overline z_2}+\overline f_1\frac{\partial f_1}{\partial \overline z_2}(f_1-\overline f_1)
+\overline f_1\overline f_2\frac{\partial f_1}{\partial z_1}+ f_1\overline f_2\frac{\partial\overline f_1}{\partial z_1}+
\overline f_2\overline f_2\frac{\partial f_2}{\partial z_1}\Big)
\end{multline*}

$f$ being hyperholomorphic, $g$ hyperholomorphic is equivalent to the system of two equations:

$$
+f_1\overline f_1\frac{\partial\overline f_2}{\partial z_2} +f_2\overline f_2\frac{\partial\overline f_1}{\partial \overline z_1} -\overline f_1\overline f_1\frac{\partial f_1}{\partial \overline z_1}-\overline f_1f_2\frac{\partial\overline f_2}{\partial \overline z_1}-\overline f_1\overline f_2\frac{\partial f_1}{\partial z_2}-
\overline f_2\overline f_2\frac{\partial f_2}{\partial z_2}+\overline f_2\frac{\partial f_2}{\partial \overline z_1}(f_1-\overline f_1)=0
$$
$$
+f_2\overline f_2\frac{\partial\overline f_1}{\partial \overline z_2}-\overline f_1\overline f_2\frac{\partial f_2}{\partial \overline z_2}-\overline f_1 f_2\frac{\partial\overline f_2}{\partial \overline z_2}+\overline f_1\frac{\partial f_1}{\partial \overline z_2}(f_1-\overline f_1)
+\overline f_1\overline f_2\frac{\partial f_1}{\partial z_1}+ f_1\overline f_2\frac{\partial\overline f_1}{\partial z_1}+
\overline f_2\overline f_2\frac{\partial f_2}{\partial z_1}=0
$$

$f_1$ and $f_2$ satisfy, by conjugation of the second equation:
$$ +f_2\overline f_2\frac{\partial f_1}{\partial z_1} - f_1 f_1\frac{\partial\overline f_1}{\partial z_1}-f_1\overline f_2\frac{\partial f_2}{\partial z_1} + f_2\frac{\partial\overline f_2}{\partial z_1}(\overline f_1-f_1) +f_1\overline f_1\frac{\partial f_2}{\partial\overline z_2}-f_1f_2\frac{\partial \overline f_1}{\partial\overline z_2}-f_2f_2\frac{\partial\overline f_2}{\partial\overline z_2}=0
$$
$$
+\overline f_1\overline f_2\frac{\partial f_1}{\partial z_1}+ f_1\overline f_2\frac{\partial\overline f_1}{\partial z_1}.+
\overline f_2\overline f_2\frac{\partial f_2}{\partial z_1}+f_2\overline f_2\frac{\partial\overline f_1}{\partial \overline z_2}-\overline f_1\overline f_2\frac{\partial f_2}{\partial \overline z_2}-\overline f_1 f_2\frac{\partial\overline f_2}{\partial \overline z_2}+\overline f_1\frac{\partial f_1}{\partial \overline z_2}(f_1-\overline f_1)
=0
$$ 
Using $\eqref{Eq1}$, we get: 
$$
+f_2\overline f_2\frac{\partial f_1}{\partial z_1} + f_1 (\overline f_1- f_1)\frac{\partial\overline f_1}{\partial z_1}-f_1\overline f_2\frac{\partial f_2}{\partial z_1} + f_2\frac{\partial\overline f_2}{\partial z_1}(\overline f_1-f_1) -f_1f_2\frac{\partial \overline f_1}{\partial\overline z_2}-f_2f_2\frac{\partial\overline f_2}{\partial\overline z_2}=0
$$
$$
+\overline f_1\overline f_2\frac{\partial f_1}{\partial z_1}+ (f_1-\overline f_1)\overline f_2\frac{\partial\overline f_1}{\partial z_1}+
\overline f_2\overline f_2\frac{\partial f_2}{\partial z_1}+\overline f_1\frac{\partial f_1}{\partial \overline z_2}(f_1-\overline f_1)+f_2\overline f_2\frac{\partial\overline f_1}{\partial \overline z_2}.
-\overline f_1 f_2\frac{\partial\overline f_2}{\partial \overline z_2}=0
$$

Assume $f_1\not =0$, $f_2\not =0$

$$
\overline f_1 \big(f_2\overline f_2\frac{\partial f_1}{\partial z_1} + f_1 (\overline f_1- f_1)\frac{\partial\overline f_1}{\partial z_1}-f_1\overline f_2\frac{\partial f_2}{\partial z_1} + f_2\frac{\partial\overline f_2}{\partial z_1}(\overline f_1-f_1) -f_1f_2\frac{\partial \overline f_1}{\partial\overline z_2}-f_2f_2\frac{\partial\overline f_2}{\partial\overline z_2}\big)=0
$$
$$
-f_2\big(+\overline f_1\overline f_2\frac{\partial f_1}{\partial z_1}+ (f_1-\overline f_1)\overline f_2\frac{\partial\overline f_1}{\partial z_1}+
\overline f_2\overline f_2\frac{\partial f_2}{\partial z_1}
-\overline f_1\frac{\partial\overline f_2}{\partial z_1}(f_1-\overline f_1)+f_2\overline f_2\frac{\partial\overline f_1}{\partial \overline z_2}-\overline f_1 f_2\frac{\partial\overline f_2}{\partial\overline z_2}\big)=0
$$ 

By sum:
$$
\overline f_1 \big( f_1 (\overline f_1- f_1)\frac{\partial\overline f_1}{\partial z_1}-f_1\overline f_2\frac{\partial f_2}{\partial z_1} + f_2\frac{\partial\overline f_2}{\partial z_1}(\overline f_1-f_1) -f_1f_2\frac{\partial \overline f_1}{\partial\overline z_2}\big)
$$
$$
-f_2\big((f_1-\overline f_1)\overline f_2\frac{\partial\overline f_1}{\partial z_1}+
\overline f_2\overline f_2\frac{\partial f_2}{\partial z_1}
-\overline f_1\frac{\partial\overline f_2}{\partial z_1}(f_1-\overline f_1)+f_2\overline f_2\frac{\partial\overline f_1}{\partial \overline z_2}\big)=0
$$
i.e. 
$$
(\overline f_1f_1+f_2\overline f_2)\big ( (\overline f_1- f_1)\frac{\partial\overline f_1}{\partial z_1}-\overline f_2\frac{\partial f_2}{\partial z_1}-f_2\frac{\partial\overline f_1}{\partial \overline z_2}\big )=0
$$

$$
\overline f_2 \big(f_2\overline f_2\frac{\partial f_1}{\partial z_1} + f_1 (\overline f_1- f_1)\frac{\partial\overline f_1}{\partial z_1}-f_1\overline f_2\frac{\partial f_2}{\partial z_1} + f_2\frac{\partial\overline f_2}{\partial z_1}(\overline f_1-f_1) -f_1f_2\frac{\partial \overline f_1}{\partial\overline z_2}-f_2f_2\frac{\partial\overline f_2}{\partial\overline z_2}\big)=0
$$
$$
f_1\big(+\overline f_1\overline f_2\frac{\partial f_1}{\partial z_1}+ (f_1-\overline f_1)\overline f_2\frac{\partial\overline f_1}{\partial z_1}+
\overline f_2\overline f_2\frac{\partial f_2}{\partial z_1}
-\overline f_1\frac{\partial\overline f_2}{\partial z_1}(f_1-\overline f_1)+f_2\overline f_2\frac{\partial\overline f_1}{\partial \overline z_2}-\overline f_1 f_2\frac{\partial\overline f_2}{\partial\overline z_2}\big)=0
$$ 

By sum
$$
\overline f_2 \big(f_2\overline f_2\frac{\partial f_1}{\partial z_1} + f_2\frac{\partial\overline f_2}{\partial z_1}(\overline f_1-f_1) -f_2f_2\frac{\partial\overline f_2}{\partial\overline z_2}\big) 
$$
$$
+f_1\big(\overline f_1\overline f_2\frac{\partial f_1}{\partial z_1}
-\overline f_1\frac{\partial\overline f_2}{\partial z_1}(f_1-\overline f_1)-\overline f_1 f_2\frac{\partial\overline f_2}{\partial\overline z_2}\big)=0
$$ 
i.e.
$$
\overline f_2\frac{\partial f_1}{\partial z_1}+ \frac{\partial\overline f_2}{\partial z_1}(\overline f_1-f_1)-f_2\frac{\partial\overline f_2}{\partial\overline z_2}=0 
$$
\end{proof}
\begin{cor}\label{Cor32} {If $f$ satisfies the conditions of the Proposition, the same is true for $\alpha f$ with $\alpha\in \R$}.\end{cor}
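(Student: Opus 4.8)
The plan is to read the claim off the characterization (ii) of Proposition~\ref{P31}: I would show that each of the two defining equations is \emph{homogeneous of degree two} in the pair of components $(f_1,f_2)$ under multiplication by a real scalar, so that replacing $f$ by $\alpha f$ multiplies both left-hand sides by $\alpha^2$ and hence preserves their vanishing.

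First I would record that, since $\alpha\in\R$, the components of $\alpha f$ are exactly $\alpha f_1$ and $\alpha f_2$, that $\overline{\alpha f_i}=\alpha\overline{f_i}$ because $\alpha=\overline\alpha$, and that each first-order operator $\partial/\partial z_1$, $\partial/\partial\overline z_2$ commutes with multiplication by the constant $\alpha$, so e.g. $\partial(\alpha f_i)/\partial z_1=\alpha\,\partial f_i/\partial z_1$. Then I would inspect the two equations of (ii) monomial by monomial: in each equation every term is a product of one undifferentiated factor and one first-order differentiated factor, each built from $f_1,f_2$ or their conjugates. Substituting $\alpha f_1,\alpha f_2$ for $f_1,f_2$ therefore attaches one factor $\alpha$ to the undifferentiated factor and a further factor $\alpha$ to the differentiated one, so each term — and hence each entire left-hand side — is multiplied by $\alpha^2$. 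By hypothesis those left-hand sides vanish for $f$, so they vanish for $\alpha f$; thus $\alpha f$ satisfies (ii), and by the equivalence in Proposition~\ref{P31} it also satisfies (i), i.e. $\alpha f$ and its right inverse are hyperholomorphic wherever defined.

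I do not expect a genuine obstacle, but the one point to emphasise is that the reality of $\alpha$ is essential: the quadratic monomials mix holomorphic and conjugated factors, so for a general complex (let alone quaternionic) scalar the substitution would attach factors $\alpha$ and $\overline\alpha$ unevenly across terms and the uniform $\alpha^2$ would be lost, so the equations would fail in general. As an even shorter alternative I could argue directly from (i): $\mathcal D$ is $\R$-linear, so $\mathcal D(\alpha f)=\alpha\,\mathcal Df=0$, while centrality of the real scalar $\alpha$ gives $(\alpha f)^{-1}=\alpha^{-1}f^{-1}$ for $\alpha\neq0$, whence $\mathcal D\big((\alpha f)^{-1}\big)=\alpha^{-1}\mathcal D(f^{-1})=0$; the case $\alpha=0$ is vacuous, since the inverse is then nowhere defined.
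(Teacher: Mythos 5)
Your proposal is correct. The paper offers no proof of Corollary \ref{Cor32} at all --- it is stated as an immediate consequence of Proposition \ref{P31} --- and your argument supplies precisely the justification that is implicitly intended: each term of the two equations in condition (ii) is a product of one undifferentiated and one once-differentiated component (or conjugate), so for real $\alpha$ the substitution $f\mapsto\alpha f$ multiplies both left-hand sides by $\alpha^2$ and preserves their vanishing. Your two side remarks are also accurate: the reality of $\alpha$ is exactly what keeps the scaling uniform (a complex or quaternionic scalar would distribute as $\alpha$ on some factors and $\overline\alpha$ on others, e.g.\ in $\overline f_1-f_1$), and the even shorter route via condition (i) --- $\R$-linearity of $\mathcal D$ together with $(\alpha f)^{-1}=\alpha^{-1}f^{-1}$, which holds because real scalars are central in $\H$ --- gives the same conclusion without touching the PDE at all.
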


\subsubsection{} Let $f=f_1+0{\bf j}$ be an almost everywhere holomorphic function, then the condition $(ii)$ of Proposition \ref{P31} is satisfied.

\begin{prop}\label{P33} Let $f=f_1+f_2{\bf j}$ be a quaternionic function such that $f_1$ and $f_2$ are real.

Then, the following conditions are equivalent:
\begin{enumerate}
\item[$(i)$] the function $f=f_1+f_2{\bf j}$ and its right inverse are hyperholomorphic, when they are defined; 
\item[$(ii)$] $f_1,f_2$ satisfy the equations:
$$
\frac{\partial f_2}{\partial z_1}+\frac{\partial f_1}{\partial \overline z_2}=0
$$ 
$$
\frac{\partial f_1}{\partial z_1}-\frac{\partial f_2}{\partial\overline z_2}=0
$$
\end{enumerate}
\end{prop}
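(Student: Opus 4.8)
The plan is to specialize Proposition \ref{P31} to the case of real components, where all the conjugations collapse. Throughout I would use that $f_1,f_2$ real means $\overline f_1=f_1$ and $\overline f_2=f_2$, together with the elementary identity $\overline{\partial h/\partial\overline z_k}=\partial h/\partial z_k$ valid for any real-valued $h$. The key observation making the proof short is that, under reality, the two equations of $(ii)$ fall out of the hyperholomorphy of $f$ \emph{alone}, so condition $(i)$ turns out to carry no information beyond ``$f$ is hyperholomorphic''.

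First I would rewrite the hyperholomorphy conditions \eqref{Eq1} under reality: they become $\partial f_1/\partial\overline z_1-\partial f_2/\partial z_2=0$ and $\partial f_1/\partial\overline z_2+\partial f_2/\partial z_1=0$. The second of these is verbatim the first equation of $(ii)$. Conjugating the first and applying the identity above turns it into $\partial f_1/\partial z_1-\partial f_2/\partial\overline z_2=0$, the second equation of $(ii)$; since conjugation is reversible this step is an equivalence. Hence ``$f$ hyperholomorphic, with real components'' is already equivalent to $(ii)$, which settles the implication $(i)\Rightarrow(ii)$ (using only the hyperholomorphy half of $(i)$).

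Next I would show that, once $f$ is hyperholomorphic, its inverse is automatically hyperholomorphic in the real case, which gives $(ii)\Rightarrow(i)$. For this I invoke Proposition \ref{P31}: given $f$ hyperholomorphic, $f^{-1}$ is hyperholomorphic iff the two equations of \ref{P31}$(ii)$ hold. Substituting $\overline f_1=f_1$ and $\overline f_2=f_2$ into those equations, every term carrying the factor $(\overline f_1-f_1)$ drops out, and the two left-hand sides collapse to $-f_2\big(\partial f_2/\partial z_1+\partial f_1/\partial\overline z_2\big)$ and $f_2\big(\partial f_1/\partial z_1-\partial f_2/\partial\overline z_2\big)$. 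Both vanish precisely by the two equations of $(ii)$, so \ref{P31}$(ii)$ holds and $f^{-1}$ is hyperholomorphic. Combining the two directions yields $(i)\Leftrightarrow(ii)$.

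I do not expect a genuine obstacle here; the argument is a controlled specialization rather than a new computation. The only points requiring care are the bookkeeping of the conjugation identity for real functions, and the decision to verify \ref{P31}$(ii)$ by direct substitution rather than by dividing out the common factor $f_2$ — doing it by substitution avoids any issue on the zero set of $f_2$ and keeps the reduction valid wherever $f$ and $f^{-1}$ are defined. The point worth flagging conceptually is that, for real components, hyperholomorphy of $f$ forces hyperholomorphy of $f^{-1}$ at no extra cost, which is exactly why a single pair of linear equations suffices to describe the whole class.
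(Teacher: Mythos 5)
Your proof is correct, and it leans on the same pillar as the paper's own proof --- Proposition \ref{P31} specialized to real components --- but the logical route is genuinely different and, in one respect, tighter. The paper runs \emph{both} directions of the equivalence through \ref{P31}$(ii)$: under reality those equations become $f_2\bigl(\frac{\partial f_2}{\partial z_1}+\frac{\partial f_1}{\partial \overline z_2}\bigr)=0$ and $f_2\bigl(\frac{\partial f_1}{\partial z_1}-\frac{\partial f_2}{\partial\overline z_2}\bigr)=0$, and the paper then factors out $f_2$, splitting into the degenerate alternative $f_2=0$ (dismissed as being of no interest, since it forces $f$ to be a real constant) and the linear system $(ii)$. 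Strictly speaking, that division is legitimate only off the zero set of $f_2$, so the paper's equivalence carries a silent caveat there. You avoid the division altogether: your forward direction extracts $(ii)$ from hyperholomorphy of $f$ alone, via conjugation of \eqref{Eq1} and the identity $\overline{\partial h/\partial\overline z_k}=\partial h/\partial z_k$ for real-valued $h$ --- an observation the paper never makes explicit --- and your backward direction needs only to multiply the equations of $(ii)$ by $\mp f_2$ to recover \ref{P31}$(ii)$, which is valid everywhere, including on the zero set of $f_2$. What your route buys is rigor (no case split, no division) plus the conceptual point that, for real components, condition $(ii)$ is exactly hyperholomorphy of $f$, so hyperholomorphy of the inverse comes for free; what the paper's route buys is brevity and the explicit identification of the degenerate solution $f_2\equiv 0$. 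Both proofs inherit whatever imprecision already sits inside Proposition \ref{P31} (whose own derivation assumes $f_1\neq 0$, $f_2\neq 0$ at one stage), but that is not a gap attributable to your argument.
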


\begin{proof}Assume $f_1$ and $f_2$ real: condition $(ii)$ of Proposition \ref{P31} reduces to: 
$$
f_2\frac{\partial f_2}{\partial z_1}+f_2\frac{\partial f_1}{\partial \overline z_2}=0;\hskip 3mm f_2\frac{\partial f_1}{\partial z_1}-f_2\frac{\partial f_2}{\partial\overline z_2}=0
$$
i.e.: $f_2=0$ and the linear system: 
$$
\frac{\partial f_2}{\partial z_1}+\frac{\partial f_1}{\partial \overline z_2}=0;
$$ 
$$
\frac{\partial f_1}{\partial z_1}-\frac{\partial f_2}{\partial\overline z_2}=0
$$

The solution $f_2=0$ means $f=f_1$=real; since $f_1$ is holomorphic and real, then $f_1=$ real constant: no interest.\end{proof}
\noindent {\it Numerical example}.
{\it Let $f_1=z_1+\overline z_1+z_2+\overline z_2+A$, $f_2= -z_1-\overline z_1+z_2+\overline z_2+ B$, $A, B\in\R$, then: $f$ and $f^{-1}$ outside the zero set of $f$, are hyperholomorphic}.\medskip

The {\it the null set of $f= f_1+f_2{\bf j}$, for $f_1,f_2$ as above, for $A=B=0$, is:
$$
z_1+\overline z_1+z_2+\overline z_2=0
$$
$$
-z_1-\overline z_1+z_2+\overline z_2=0
$$}
i.e, by difference and sum:
$$
\overline z_1+z_1=0; z_2+\overline z_2=0,
$$
i.e. 
$$
x_1=0; x_2=0
$$
in $\R^4$.

\subsection{Definition} We will call {\it w-hypermeromorphic function} (w- for {\it weak}) any almost everywhere defined hyperholomorphic function whose right inverse is hyperholomorphic almost everywhere. \smallskip 

Recall: the conjugate (in the sense of quaternions) of $f=f_1+f_2{\bf j}$ is $\overline f=\overline f_1-f_2{\bf j}$, and $f^{-1}=\vert f\vert^{-1}\overline f$.

\section{On the spaces of hypermeromorphic functions.}

\subsection{Sum of two w-hypermeromorphic functions.}
Let $f, g$ be two almost everywhere hyperholomorphic functions whose inverses have the same property, then: 

1) $f+g$ is almost everywhere hyperholomorphic; 

2) the inverse of $f+g$, $(f+g)^{-1}$ has to be almost everywhere hyperholomorphic. \smallskip 

Recall $\displaystyle {\mathcal D}=\frac{1}{2}\big(\frac{\partial}{\partial\overline z_1}
+{\bf j}\frac{\partial}{\partial\overline z_2}\big)$.

$(f+g)^{-1}=\vert f+g\vert^{-1}(\overline {f+g})$; $\vert f+g\vert^{-1}$, being real, is transparent with respect to ${\mathcal D}$ because $z_1{\bf j}={\bf j}\overline z_1$ for $z_1\in\C$;

$$
{\mathcal D}((f+g)^{-1})=-{\vert f+g\vert}^{-2}{\mathcal D}(\vert f+g)\vert)(\overline {f+g})+\vert f+g\vert ^ {-1}{\mathcal D}(\overline {f+g})
$$
We must have:
$$
{\vert f+g\vert}^2{\mathcal D}((f+g)^{-1})=-{\mathcal D}(\vert f+g)\vert)(\overline {f+g})+\vert f+g\vert {\mathcal D}(\overline {f+g})=0
$$

\begin{prop}If $f$ and $g$ are two w-hypermeromorphic functions, then the following conditions are equivalent:
\begin{enumerate}
\item[$(i)$] the sum $h=f+g$ is w-hypermeromorphic; 
\item[$(ii)$] $h$ satisfies the following PDE:
$$
-\big( \frac{\partial \vert h\vert}{\partial\overline z_1}+{\bf j}\frac{\partial\vert h\vert}{\partial\overline z_2}\big) (\overline h_1-h_2{\bf j})+\vert h\vert  {\big(\frac{\partial}{\partial\overline z_1}
+{\bf j}\frac{\partial}{\partial\overline z_2}\big)}(\overline h_1-h_2{\bf j})=0
$$
\end{enumerate}
\end{prop}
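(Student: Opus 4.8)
The plan is to reduce condition (i) to a single equation for the right inverse $h^{-1}$ and then identify that equation with the PDE in (ii). First I would dispose of the ``numerator'' part of w-hypermeromorphy. Since $f$ and $g$ are w-hypermeromorphic they are in particular almost everywhere hyperholomorphic, and by the Proposition asserting that the almost everywhere defined hyperholomorphic functions form an $\H$-right vector space, their sum $h=f+g$ is again almost everywhere hyperholomorphic. Hence the only genuine requirement left in ``$h$ is w-hypermeromorphic'' is that $h^{-1}$ be hyperholomorphic where defined, i.e. that ${\mathcal D}(h^{-1})=0$ on the full-measure open set $U\setminus Z(h)$ on which $h$, $h^{-1}$ and $\vert h\vert^{-1}$ are simultaneously defined and smooth. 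So (i) is equivalent to ${\mathcal D}(h^{-1})=0$ almost everywhere.

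Next I would compute ${\mathcal D}(h^{-1})$ from $h^{-1}=\vert h\vert^{-1}\overline h=\vert h\vert^{-1}(\overline h_1-h_2{\bf j})$, where $\vert h\vert=h_1\overline h_1+h_2\overline h_2$ and $h_i=f_i+g_i$. The one point that needs care is the ``transparency'' of the real scalar $\vert h\vert^{-1}$ with respect to ${\mathcal D}$: because $\vert h\vert^{-1}$ and its first derivatives are real, they commute with ${\bf j}$, so the Leibniz expansion of ${\mathcal D}(\vert h\vert^{-1}\overline h)$ groups into a term $({\mathcal D}\vert h\vert^{-1})\,\overline h$, in which $\overline h$ stays on the right, plus a term $\vert h\vert^{-1}\,{\mathcal D}(\overline h)$, in which the scalar is pulled out on the left past the ${\bf j}\,\partial_{\overline z_2}$. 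Using ${\mathcal D}(\vert h\vert^{-1})=-\vert h\vert^{-2}{\mathcal D}(\vert h\vert)$ this reproduces the formula recorded just before the statement,
$$
{\mathcal D}(h^{-1})=-\vert h\vert^{-2}\,{\mathcal D}(\vert h\vert)\,\overline h+\vert h\vert^{-1}\,{\mathcal D}(\overline h).
$$

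Finally I would conclude the equivalence. On $U\setminus Z(h)$ we have $\vert h\vert>0$, so multiplication by the nonvanishing real factor $2\vert h\vert^2$ turns ${\mathcal D}(h^{-1})=0$ into
$$
-\big(\frac{\partial\vert h\vert}{\partial\overline z_1}+{\bf j}\frac{\partial\vert h\vert}{\partial\overline z_2}\big)\overline h+\vert h\vert\big(\frac{\partial}{\partial\overline z_1}+{\bf j}\frac{\partial}{\partial\overline z_2}\big)\overline h=0,
$$
the factors of $\frac12$ in ${\mathcal D}$ cancelling against the $2$. Writing $\overline h=\overline h_1-h_2{\bf j}$ this is verbatim the PDE in (ii); and since $2\vert h\vert^2$ is invertible almost everywhere, the implication runs in both directions. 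Combined with the reduction of the first paragraph, (i) and (ii) are equivalent.

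I expect the only genuinely delicate step to be the bookkeeping in the second paragraph: justifying rigorously that the real factor $\vert h\vert^{-1}$ may be carried through the noncommutative part ${\bf j}\,\partial_{\overline z_2}$ of ${\mathcal D}$ with the correct left/right placement of $\overline h$ and the correct sign of the $-\vert h\vert^{-2}{\mathcal D}(\vert h\vert)$ term, together with the measure-theoretic caveat that every identity is asserted only off the measure-zero set $Z(h)$. Everything else is linearity (from the vector-space Proposition) and clearing the scalar factor $\vert h\vert^2$.
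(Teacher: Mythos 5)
Your proof is correct and follows essentially the same route as the paper: the paper likewise notes that $h=f+g$ is automatically almost everywhere hyperholomorphic, computes ${\mathcal D}(h^{-1})=-\vert h\vert^{-2}{\mathcal D}(\vert h\vert)\overline h+\vert h\vert^{-1}{\mathcal D}(\overline h)$ using the transparency of the real factor $\vert h\vert^{-1}$, and clears the nonvanishing scalar $\vert h\vert^2$ to obtain the stated PDE. Your write-up is in fact somewhat more explicit than the paper's about the measure-theoretic caveats and the factor of $2$, but the underlying argument is identical.
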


If $f$ and $g$ are holomorphic, the condition $(i)$ is trivially satisfied. 

\begin{proof}Explicit the condition: $${\vert h\vert}^2{\mathcal D}(h^{-1})=-{\mathcal D}(\vert h)\vert)(\overline {h})+\vert h\vert {\mathcal D}(\overline {h})=0$$; with $\overline h=\overline h_1-h_2{\bf j}$

$$
2{\mathcal D}\overline h=\big(\frac{\partial}{\partial\overline z_1}
+{\bf j}\frac{\partial}{\partial\overline z_2}\big)(\overline h_1-h_2{\bf j})= \frac{\partial\overline h_1}{\partial\overline z_1}+\frac{\partial \overline h_2}{\partial z_2}-\big(\frac{\partial h_2}{\partial\overline z_1}-\frac{\partial h_1}{\partial z_2}\big){\bf j}
$$

\begin{multline*}
{\mathcal D}(\vert h\vert)={\mathcal D}(h_1\overline h_1+h_2\overline h_2)=\frac{1}{2}\big(\frac{\partial}{\partial\overline z_1}
+{\bf j}\frac{\partial}{\partial\overline z_2}\big)(h_1\overline h_1+h_2\overline h_2)
\\
=
\frac{1}{2}\big(\overline h_1\frac{\partial h_1}{\partial\overline z_1} +\overline h_2\frac{\partial h_2}{\partial\overline z_1} +h_1\frac{\partial\overline h_1}{\partial\overline z_1} +h_2\frac{\partial\overline h_2}{\partial\overline z_1}\big)
\\
+\frac{1}{2}\big(\overline h_1\frac{\partial h_1}{\partial z_2} +\overline h_2\frac{\partial h_2}{\partial z_2}+h_1\frac{\partial\overline h_1}{\partial z_2} +h_2\frac{\partial\overline h_2}{\partial z_2}\big){\bf j}=0.
\end{multline*}\end{proof}

\noindent {\it Remark on functions of complex variables $z$,$\overline z$ with real values.} We assume such a function $f(z,\overline z)$ a convergente series in $(z,\overline z)$, $\displaystyle \sum_{0,0}^{\infty,\infty} a_{kl}z^k\overline z^l$. From $f=\overline f$, we get: $a_{kl}=\overline a_{lk}$. 
$\displaystyle\frac{\partial f}{\partial\overline z}=\sum la_{kl}z^k\overline z^{l-1}; \frac{\partial\overline f}{\partial z}=\sum k\overline a_{kl}\overline z^{k-1}z^l$; i.e. $\displaystyle\frac{\partial f}{\partial\overline z}=\frac{\partial f}{\partial z}$. The result remains valid if $f$ is $C^\infty$.

\begin{prop}If $f$ and $g$ are two w-hypermeromorphic functions whose components are real, then the following propeeties are equivalent:
\begin{enumerate}
\item[$(i)$] the sum $h=f+g$ is w-hypermeromorphic; 
\item[$(ii)$] $f$ and $g$ satisfy the following condition:
$h_1=A_1(z_2,\overline z_2); h_2=A_2(z_1,\overline z_1)$ and $A_1+A_2=B$, const.
\end{enumerate}
\end{prop}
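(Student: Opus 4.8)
The plan is to obtain the statement as the real-component specialization of the preceding Proposition. Since a w-hypermeromorphic function is in particular hyperholomorphic, and the almost everywhere defined hyperholomorphic functions form an $\H$-right vector space, the sum $h=f+g$ is automatically hyperholomorphic and has real components $h_1=f_1+g_1$, $h_2=f_2+g_2$. Hence condition $(i)$ is equivalent to the single requirement that the inverse $h^{-1}=\vert h\vert^{-1}\overline h$ be hyperholomorphic, which by the preceding Proposition is the vanishing of $-\mathcal D(\vert h\vert)\,\overline h+\vert h\vert\,\mathcal D(\overline h)$. The whole argument then consists in inserting $\overline h_1=h_1$ and $\overline h_2=h_2$ into this quaternionic identity and simplifying it.

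First I would compute the two ingredients separately for real components: the conjugate derivative $\mathcal D(\overline h)=\tfrac{1}{2}(\partial_{\overline z_1}+{\bf j}\partial_{\overline z_2})(h_1-h_2{\bf j})$ and the scalar factor $\mathcal D(\vert h\vert)$ with $\vert h\vert=h_1^2+h_2^2$. To reduce these I would use two tools: the hyperholomorphy of $h$ in the form \eqref{Eq1}, i.e. $\partial_{\overline z_1}h_1-\partial_{z_2}h_2=0$ and $\partial_{\overline z_2}h_1+\partial_{z_1}h_2=0$; and the Remark on real-valued functions of $(z,\overline z)$, which allows every holomorphic derivative $\partial_{z_j}$ of a real component to be exchanged with the antiholomorphic one. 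Carrying these substitutions through the non-commutative product $\mathcal D(\vert h\vert)*\overline h$, I expect the cross terms to cancel against $\vert h\vert\,\mathcal D(\overline h)$, leaving only the terms proportional to $\vert h\vert$, so that the condition collapses to the scalar equation $\mathcal D(\vert h\vert)=0$. Verifying that this collapse is exact, with no surviving coupling, is the step I expect to be the main obstacle, because it demands careful bookkeeping of the quaternion multiplication together with systematic use of the reality relations.

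Granting the reduction, I would split $\mathcal D(\vert h\vert)=0$ into its scalar and ${\bf j}$ parts, namely $h_1\partial_{\overline z_1}h_1+h_2\partial_{\overline z_1}h_2=0$ and $h_1\partial_{\overline z_2}h_1+h_2\partial_{\overline z_2}h_2=0$, and read them together with \eqref{Eq1} as a homogeneous linear system in the first derivatives of $h_1$ and $h_2$. Off the zero set, where $\vert h\vert=h_1^2+h_2^2>0$, this system is nondegenerate and forces the corresponding derivatives to vanish, so that $h_1$ depends only on $(z_2,\overline z_2)$ and $h_2$ only on $(z_1,\overline z_1)$; that is, $h_1=A_1(z_2,\overline z_2)$ and $h_2=A_2(z_1,\overline z_1)$. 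The relation remaining from \eqref{Eq1} then equates a function of $(z_2,\overline z_2)$ with a function of $(z_1,\overline z_1)$, which is possible only if both are constant, giving $A_1+A_2=B$. For the converse I would substitute a pair of the form $h_1=A_1(z_2,\overline z_2)$, $h_2=A_2(z_1,\overline z_1)$ with $A_1+A_2$ constant directly into $\mathcal D h$ and into $\mathcal D(\vert h\vert)$ and check that both vanish, which is immediate; writing such an $h$ as a sum $f+g$ of two w-hypermeromorphic functions with real components then yields $(ii)\Rightarrow(i)$.
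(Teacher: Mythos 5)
You built the whole argument on the reduction that you explicitly deferred (``Granting the reduction\dots''), and that reduction is false. Carry it out: for $h$ hyperholomorphic with real components, set $a=\partial h_1/\partial\overline z_1$, $b=\partial h_1/\partial z_2$; equations \eqref{Eq1} and their complex conjugates (legitimate because $h_1,h_2$ are real) give $\partial h_2/\partial z_2=a$ and $\partial h_2/\partial\overline z_1=-b$, so that
\begin{equation*}
{\mathcal D}(\overline h)=a+b\,{\bf j},\qquad
{\mathcal D}(\vert h\vert)=(h_1a-h_2b)+(h_1b+h_2a)\,{\bf j},
\end{equation*}
and then, multiplying on the right by $\overline h=h_1-h_2{\bf j}$ with the rule $(w_1+w_2{\bf j})*(w_1'+w_2'{\bf j})=(w_1w_1'-w_2\overline{w_2'})+(w_1w_2'+w_2\overline{w_1'}){\bf j}$,
\begin{align*}
{\mathcal D}(\vert h\vert)*\overline h
&=\bigl[(h_1a-h_2b)h_1+(h_1b+h_2a)h_2\bigr]
+\bigl[(h_1b+h_2a)h_1-(h_1a-h_2b)h_2\bigr]{\bf j}\\
&=(h_1^2+h_2^2)(a+b\,{\bf j})
=\vert h\vert\,{\mathcal D}(\overline h).
\end{align*}
Hence $-{\mathcal D}(\vert h\vert)\,\overline h+\vert h\vert\,{\mathcal D}(\overline h)$ vanishes \emph{identically}: the condition does not collapse to ${\mathcal D}(\vert h\vert)=0$, it collapses to $0=0$, so $(i)$ holds for \emph{every} sum of w-hypermeromorphic functions with real components. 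This is fatal not only for your proof but for the equivalence itself: take $f=g$ with $f_1=z_1+\overline z_1+z_2+\overline z_2$, $f_2=-z_1-\overline z_1+z_2+\overline z_2$ (the paper's own numerical example, which the paper asserts is w-hypermeromorphic; by Corollary \ref{Cor32}, so is $2f$). Then $h=2f$ satisfies $(i)$, while $h_1$ visibly depends on $z_1,\overline z_1$, so $(ii)$ fails. For what it is worth, the paper's own proof breaks at exactly the corresponding spot: its equation \eqref{13'} carries a coefficient $3$ that a correct expansion never produces; the correct expansion yields the identity above.

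Independently of this, your endgame misreads what ${\mathcal D}(\vert h\vert)=0$ would give. Those two scalar equations are $h_1a-h_2b=0$ and $h_2a+h_1b=0$, a homogeneous $2\times2$ system in $(a,b)$ with determinant $h_1^2+h_2^2\neq0$ off $Z(h)$; hence $a=b=0$, and conjugation together with \eqref{Eq1} then annihilates \emph{all} first derivatives of $h_1$ and $h_2$. So this route would force both components to be locally constant --- strictly stronger than $h_1=A_1(z_2,\overline z_2)$, $h_2=A_2(z_1,\overline z_1)$ with $A_1+A_2$ constant --- not the selective vanishing you claim. In short: the key step you granted is false, and even if it were granted, the linear algebra after it would not produce condition $(ii)$.
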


Consider the full condition: $\vert h\vert^2{\mathcal D}(h^{-1})=-{\mathcal D}(\vert h\vert)\overline h+\vert h\vert {\mathcal D}\overline h=0$
\begin{proof}

From Proposition \ref{P33}, we have 
$$
\frac{\partial f_2}{\partial z_1}+\frac{\partial f_1}{\partial \overline z_2}=0 
$$ 
$$
\frac{\partial f_1}{\partial z_1}-\frac{\partial f_2}{\partial\overline z_2}=0
$$

Recall: $\displaystyle\frac{\partial 
f_1}{\partial\overline z_1}{\bf j}={\bf j}\frac{\partial\overline 
f_1}{\partial z_1}$

$$
2{\mathcal D} h=\big(\frac{\partial}{\partial\overline z_1}
+{\bf j}\frac{\partial}{\partial\overline z_2}\big)( h_1-h_2{\bf j})= \frac{\partial h_1}{\partial\overline z_1}+\frac{\partial h_2}{\partial z_2}-\big(\frac{\partial h_2}{\partial\overline z_1}-\frac{\partial h_1}{\partial z_2}\big){\bf j}
$$

$${\mathcal D}(\vert h\vert)={\mathcal D}(h_1^2+h_2^2)=\frac{1}{2}\big(\frac{\partial}{\partial\overline z_1}
+{\bf j}\frac{\partial}{\partial\overline z_2}\big)(h_1^2+h_2^2)=
h_1\frac{\partial h_1}{\partial\overline z_1} +h_2\frac{\partial h_2}{\partial\overline z_1} +\big(h_1\frac{\partial h_1}{\partial z_2} +h_2\frac{\partial h_2}{\partial z_2}\big){\bf j},
$$

$$
{\vert h\vert}^2{\mathcal D}(h^{-1})=-{\mathcal D}(\vert h)\vert)\overline h+\vert h\vert {\mathcal D}(\overline {h})=-\big(h_1\frac{\partial h_1}{\partial\overline z_1} +h_2\frac{\partial h_2}{\partial\overline z_1} +\big(h_1\frac{\partial h_1}{\partial z_2} +h_2\frac{\partial h_2}{\partial z_2}\big){\bf j}\big)(h_1-h_2{\bf j})
$$
$$
+(h_1^2+h_2^2)\frac{1}{2}\big(\frac{\partial h_1}{\partial\overline z_1}+\frac{\partial h_2}{\partial z_2}-\big(\frac{\partial h_2}{\partial\overline z_1}-\frac{\partial h_1}{\partial z_2}\big){\bf j}\big)=0
$$
\begin{equation}\label{13'}
\cong\big(-h_1^2\frac{\partial h_1}{\partial\overline z_1}+3h_2^2\frac{\partial h_2}{\partial z_2}\big)+\big(-h_1^2\frac{\partial h_2}{\partial z_2}+h_2^2\frac{\partial h_1}{\partial\overline z_1}\big){\bf j}=0 
\end{equation}
{}From \eqref{13'}
$$
-h_1^2\frac{\partial h_1}{\partial\overline z_1}+3h_2^2\frac{\partial h_2}{\partial z_2}=0; h_2^2\frac{\partial h_1}{\partial\overline z_1}-h_1^2\frac{\partial h_2}{\partial z_2}=0
$$
Then: either $h_1^4-3h_2^4=0,$ or:
$$
\frac{\partial h_1}{\partial\overline z_1}=0; \frac{\partial h_2}{\partial z_2}=0
$$
and, previously
$$
\frac{\partial h_1}{\partial z_2}=-\frac{\partial h_2}{\partial\overline z_1}
$$
Then: $h_1=A_1(z_2,\overline z_2); h_2=A_2(z_1,\overline z_1)$ and $A_1+A_2=B$, const. \end{proof}

\subsection{Product of two w-hypermeromorphic functions}

\begin{prop}Let $f$, $g$ be two w-hypermeromorphic functions on $U$, then the following conditions are equivalent:
\begin{enumerate}
\item[$(i)$] the product $f*g$ is w-hypermeremorphic;
\item[$(ii)$] $f$ and $g$ satisfy the system of PDE:

$$
g_1(\frac{\partial
f_1}{\partial\overline z_1}+\frac{\partial\overline f_2}{\partial z_2})+(f_1-\overline f_1)\frac{\partial g_1}{\partial\overline z_1}+\overline f_2\frac{\partial g_1}{\partial z_2}-f_2\frac{\partial\overline g_2}{\partial\overline z_1}=0
$$

$$
g_1(\frac{\partial
f_1}{\partial\overline z_2}-\frac{\partial\overline f_2}{\partial z_1})+(f_1-\overline f_1)\frac{\partial g_1}{\partial\overline z_2}-\overline f_2\frac{\partial g_1}{\partial z_1}-f_2\frac{\partial\overline g_2}{\partial\overline z_2}=0
$$
\end{enumerate}
\end{prop}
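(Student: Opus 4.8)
The plan is to read the w\nobreakdash-hypermeromorphy of the product off the product formula of Proposition \ref{Proposition 2.3.2.}, which is precisely the device that measures the failure of $\mathcal D$ to be multiplicative. By definition, $f*g$ is w\nobreakdash-hypermeromorphic exactly when both $f*g$ and its inverse $(f*g)^{-1}=g^{-1}*f^{-1}$ are hyperholomorphic almost everywhere. Since $f$ and $g$ are already w\nobreakdash-hypermeromorphic we have $\mathcal Df=\mathcal Dg=0$ and $\mathcal Df^{-1}=\mathcal Dg^{-1}=0$, so the only genuinely new requirement is that the defect terms created by the product rule vanish.

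First I would apply Proposition \ref{Proposition 2.3.2.} with $f'=f$, $f''=g$. Because $\mathcal Df=0$, the term $\mathcal Df*{\bf j}g$ drops out and only the second summand survives:
\[
\mathcal D(f*g)=\big(f(\tfrac{\partial}{\partial\overline z_1})+\overline f\,{\bf j}\tfrac{\partial}{\partial\overline z_2}\big)g .
\]
This involves only the first derivatives of $g_1,g_2$, with coefficients built from $f_1,f_2,\overline f_1,\overline f_2$. Writing $g=g_1+g_2{\bf j}$ and carrying out the two quaternionic products, using the multiplication rule of \S2.1 together with the relation ${\bf j}c=\overline c\,{\bf j}$ for $c\in\C$, I would then split the outcome into its real part and its ${\bf j}$-part, producing two complex scalar equations.

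The reductions come next. I would invoke the hyperholomorphy of $g$, i.e.\ \eqref{Eq1} for $g$ (so $\partial\overline g_2/\partial z_2=\partial g_1/\partial\overline z_1$ and $\partial\overline g_2/\partial z_1=-\partial g_1/\partial\overline z_2$), to trade the $\overline g_2$-derivatives for $g_1$-derivatives; this is exactly what merges $f_1\,\partial g_1/\partial\overline z_1$ with $-\overline f_1\,\partial g_1/\partial\overline z_1$ into the combination $(f_1-\overline f_1)\,\partial g_1/\partial\overline z_1$ visible in (ii). I would then apply \eqref{Eq1} for $f$ and its conjugate form $\partial f_2/\partial\overline z_1+\partial\overline f_1/\partial z_2=0$ to annihilate the terms carrying a bare factor $\overline g_2$ and to rewrite the coefficient of $g_1$ through $\partial f_1/\partial\overline z_1$ and $\partial\overline f_2/\partial z_2$. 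Separating the real and the ${\bf j}$ components of the reduced expression yields the two displayed PDE, giving the equivalence (i)$\Leftrightarrow$(ii) for the hyperholomorphy of $f*g$.

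Finally I would dispose of the inverse. Since $(f*g)^{-1}=g^{-1}*f^{-1}$ with $g^{-1},f^{-1}$ hyperholomorphic, the same use of Proposition \ref{Proposition 2.3.2.} (now with $f'=g^{-1}$, $f''=f^{-1}$) writes $\mathcal D((f*g)^{-1})$ as a single defect term; substituting $g^{-1}=\vert g\vert^{-1}(\overline g_1-g_2{\bf j})$, $f^{-1}=\vert f\vert^{-1}(\overline f_1-f_2{\bf j})$ and feeding in the characterisation of Proposition \ref{P31} for $f$ and for $g$, I expect this to collapse to the same system, so that (ii) already secures both halves of w\nobreakdash-hypermeromorphy. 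The hard part will be precisely this bookkeeping: tracking the noncommutative ${\bf j}$-conjugations through the expansion and identifying which of the many terms are killed by the two hyperholomorphy identities of $f$ and of $g$, so that the raw expansion condenses into the compact two-equation form — and, in particular, confirming that the inverse condition contributes nothing independent of the two stated equations.
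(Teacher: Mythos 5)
Your proposal diverges from the paper's proof at its very first step, and the divergence is fatal to reaching the stated system (ii). The paper never invokes Proposition \ref{Proposition 2.3.2.}: it writes the two equations \eqref{Eq1} for the components of $f*g$ directly, expands by the Leibniz rule, and then simplifies using \eqref{Eq1} for $f$ and for $g$. In that expansion the derivatives falling on the components of $f$ produce the terms $g_1\big(\frac{\partial f_1}{\partial\overline z_1}+\frac{\partial\overline f_2}{\partial z_2}\big)$ and $-\overline g_2\big(\frac{\partial f_2}{\partial\overline z_1}+\frac{\partial\overline f_1}{\partial z_2}\big)$; only the second is killed by the hyperholomorphy of $f$ (via the conjugate of the second equation of \eqref{Eq1}). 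The first survives --- note the plus sign, so under \eqref{Eq1} it equals $2g_1\frac{\partial f_1}{\partial\overline z_1}$, which is not zero in general --- and it is precisely the bare-$g_1$ term that appears in system (ii). Your first step asserts the opposite: that since $\mathcal D f=0$ the entire contribution of the derivatives of $f$ (the term written $\mathcal Df'*{\bf j}f''$) drops out, leaving an expression involving only first derivatives of $g_1,g_2$. From such an expression no term with an undifferentiated factor $g_1$ can ever emerge, so your route cannot terminate at (ii); indeed your later instruction to ``rewrite the coefficient of $g_1$'' is incompatible with your own starting point, since after your reduction there is no coefficient of $g_1$ left to rewrite. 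The trap is that the ``first term'' of Proposition \ref{Proposition 2.3.2.} is not an honest quaternionic product of $\mathcal Df'$ by ${\bf j}f''$: the paper's own derivation shows it is a differential expression in the derivatives $\partial f'_i/\partial\overline z_k$ of the unconjugated components, and, with the expansion of $f*g$ the paper actually uses to state (ii), its vanishing is not a consequence of $\mathcal Df'=0$. That term must be kept and expanded, as the paper does.

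A secondary point: you rightly note that condition (i) literally demands that $(f*g)^{-1}=g^{-1}*f^{-1}$ be hyperholomorphic as well, and you propose a second application of the product formula for this; but you leave it as an expectation (``I expect this to collapse to the same system''), so on your own terms that half is unproven. For comparison, the paper's proof is silent on the inverse altogether --- it only establishes the equivalence of (ii) with the hyperholomorphy of $f*g$ --- so this omission alone would not distinguish your argument from the paper's; the substantive defect is the one above.
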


\begin{proof}
Let $f=f_1+f_2{\bf j}$ and $g=g_1+g_2{\bf j}$ two hypermeromorphic functions and $f*g=f_1g_1-f_2\overline g_2+(f_1g_2-f_2\overline g_1){\bf j}$ their product, then
\begin{eqnarray*}
&&\frac{\partial f_1}{\partial\overline z_1}-\frac{\partial\overline f_2}{\partial z_2}=0;
\\
&&\frac{\partial
(f_1g_1-f_2\overline g_2)}{\partial\overline z_1}-\frac{\partial(\overline f_1\overline g_2-\overline f_2g_1)}{\partial z_2}\\
&&\qquad=g_1(\frac{\partial
f_1}{\partial\overline z_1}+\frac{\partial\overline f_2}{\partial z_2})-\overline g_2(\frac{\partial\overline f_1}{\partial z_2}+\frac{\partial f_2}{\partial\overline z_1})+f_1\frac{\partial g_1}{\partial\overline z_1}-\overline f_1\frac{\partial\overline g_2}{\partial z_2}+\overline f_2\frac{\partial g_1}{\partial z_2}-f_2\frac{\partial\overline g_2}{\partial\overline z_1}=0
\\
&&g_1(\frac{\partial
f_1}{\partial\overline z_1}+\frac{\partial\overline f_2}{\partial z_2})+f_1\frac{\partial g_1}{\partial\overline z_1}-\overline f_1\frac{\partial\overline g_2}{\partial z_2}+\overline f_2\frac{\partial g_1}{\partial z_2}-f_2\frac{\partial\overline g_2}{\partial\overline z_1}=0.
\\
&&\frac{\partial
(f_1g_1-f_2\overline g_2)}{\partial\overline z_2}+\frac{\partial(\overline f_1\overline g_2-\overline f_2g_1)}{\partial z_1}\\&&\qquad =g_1(\frac{\partial
f_1}{\partial\overline z_2}-\frac{\partial\overline f_2}{\partial z_1})+\overline g_2(\frac{\partial\overline f_1}{\partial z_1}-\frac{\partial f_2}{\partial\overline z_2})+f_1\frac{\partial g_1}{\partial\overline z_2}-\overline f_1\frac{\partial\overline g_2}{\partial z_1}+\overline f_2\frac{\partial g_1}{\partial z_1}-f_2\frac{\partial\overline g_2}{\partial\overline z_2}=0
\\
&&g_1(\frac{\partial
f_1}{\partial\overline z_2}-\frac{\partial\overline f_2}{\partial z_1})+f_1\frac{\partial g_1}{\partial\overline z_2}+\overline f_1\frac{\partial\overline g_2}{\partial z_1}-\overline f_2\frac{\partial g_1}{\partial z_1}-f_2\frac{\partial\overline g_2}{\partial\overline z_2}=0
\end{eqnarray*}
\end{proof}

{\bf Remark.} If $f$ and $g$ are holomorphic, or meromorphic, the condition~$(ii)$ is trivially satisfied.

\begin{prop}Let $f$, $g$ be two w-hypermeromorphic functions on $U$, whose component are real, then the following conditions are equivalent:

\begin{enumerate}
\item[$(i)$] the product $f*g$ is w-hypermeremorphic;
\item[$(ii)$] $f$ and $g$ satisfy the PDE: 
$$
\frac{\partial
f_1}{\partial z_1}\frac{\partial g_2}{\partial z_2}+\frac{\partial
f_1}{\partial z_2}\frac{\partial g_2}{\partial z_1}=0
$$ 
\end{enumerate}
\end{prop}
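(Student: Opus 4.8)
The plan is to start from the product characterization just established in the preceding proposition and specialize it to real components, then eliminate the surviving function values. First I would use that $f_1,f_2,g_1,g_2$ are real, so $\overline f_1=f_1$, $\overline f_2=f_2$, $\overline g_1=g_1$, $\overline g_2=g_2$; this kills every term carrying the factor $f_1-\overline f_1$ and removes all conjugations on $f_2,g_2$ in condition $(ii)$ of that proposition, leaving
$$
g_1\big(\frac{\partial f_1}{\partial\overline z_1}+\frac{\partial f_2}{\partial z_2}\big)+f_2\frac{\partial g_1}{\partial z_2}-f_2\frac{\partial g_2}{\partial\overline z_1}=0,\qquad g_1\big(\frac{\partial f_1}{\partial\overline z_2}-\frac{\partial f_2}{\partial z_1}\big)-f_2\frac{\partial g_1}{\partial z_1}-f_2\frac{\partial g_2}{\partial\overline z_2}=0.
$$

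Next I would insert the structural relations carried by $f$ and $g$ themselves. Since each is w-hypermeromorphic with real components, Proposition \ref{P33} gives $\frac{\partial f_1}{\partial z_1}=\frac{\partial f_2}{\partial\overline z_2}$ and $\frac{\partial f_1}{\partial\overline z_2}=-\frac{\partial f_2}{\partial z_1}$ (and the analogues for $g$), while hyperholomorphy adds $\frac{\partial f_1}{\partial\overline z_1}=\frac{\partial f_2}{\partial z_2}$. Reading each $\partial/\partial\overline z_k$ on a real component as $\partial/\partial z_k$, as permitted by the remark on real-valued functions above, the two bracketed factors collapse and the middle terms combine, so that up to the factor $2$ the two equations reduce to the homogeneous linear system
$$
g_1\frac{\partial f_2}{\partial z_2}-f_2\frac{\partial g_2}{\partial z_1}=0,\qquad g_1\frac{\partial f_2}{\partial z_1}+f_2\frac{\partial g_2}{\partial z_2}=0
$$
in the unknown pair $(g_1,f_2)$.

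The decisive step is the elimination of the function values $g_1$ and $f_2$. On the set where they are nonzero --- all that is relevant in the almost-everywhere framework --- the ratio $g_1/f_2$ can be read off from each equation, and equating the two expressions (equivalently, cross-multiplying) yields the single relation $\frac{\partial f_2}{\partial z_1}\frac{\partial g_2}{\partial z_1}+\frac{\partial f_2}{\partial z_2}\frac{\partial g_2}{\partial z_2}=0$. Finally I would convert the derivatives of $f_2$ into derivatives of $f_1$ by the same Proposition \ref{P33} relations (again with the remark), namely $\frac{\partial f_2}{\partial z_2}=\frac{\partial f_1}{\partial z_1}$ and $\frac{\partial f_2}{\partial z_1}=-\frac{\partial f_1}{\partial z_2}$, which brings the relation to the asserted bilinear PDE $\frac{\partial f_1}{\partial z_1}\frac{\partial g_2}{\partial z_2}+\frac{\partial f_1}{\partial z_2}\frac{\partial g_2}{\partial z_1}=0$, the overall sign being fixed by those two relations; the converse direction simply runs the substitutions in reverse.

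I expect the real difficulty to be twofold. The genuinely delicate point is the elimination of $g_1$ and $f_2$: it is valid only off their zero loci, so one must argue that the identity obtained almost everywhere extends to all of $U$ by continuity, and check that the factor thrown away does not conceal additional branches of solutions. The second, more clerical, hazard is the repeated appeal to the remark on real-valued functions to trade $\partial/\partial\overline z_k$ for $\partial/\partial z_k$: every cancellation above depends on invoking it in exactly the right places, and a single stray conjugation reverses a sign in the final product, so I would track those signs with particular care and reconcile them against the multiplication convention used in the preceding product proposition.
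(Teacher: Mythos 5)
Your overall route is the same as the paper's: specialize condition $(ii)$ of the preceding product proposition to real components, feed in the relations of Proposition \ref{P33} (together with hyperholomorphy and the remark on real-valued functions), reduce to a homogeneous $2\times2$ linear system in the pair $(g_1,f_2)$, and obtain the PDE as the condition that this system admit the nontrivial solution $(g_1,f_2)$. The paper does exactly this, arriving at the system
\[
g_1\frac{\partial f_1}{\partial z_1}+f_2\frac{\partial g_2}{\partial z_1}=0,\qquad
g_1\frac{\partial f_1}{\partial z_2}-f_2\frac{\partial g_2}{\partial z_2}=0,
\]
whose determinant condition is precisely the asserted relation $\frac{\partial f_1}{\partial z_1}\frac{\partial g_2}{\partial z_2}+\frac{\partial f_1}{\partial z_2}\frac{\partial g_2}{\partial z_1}=0$.

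The genuine gap is in your last step: your own equations do not yield the stated PDE. Your reduced system
\[
g_1\frac{\partial f_2}{\partial z_2}-f_2\frac{\partial g_2}{\partial z_1}=0,\qquad
g_1\frac{\partial f_2}{\partial z_1}+f_2\frac{\partial g_2}{\partial z_2}=0
\]
does give, by elimination, $\frac{\partial f_2}{\partial z_1}\frac{\partial g_2}{\partial z_1}+\frac{\partial f_2}{\partial z_2}\frac{\partial g_2}{\partial z_2}=0$; so far this is internally consistent. But now carry out the two substitutions you cite, $\frac{\partial f_2}{\partial z_2}=\frac{\partial f_1}{\partial z_1}$ and $\frac{\partial f_2}{\partial z_1}=-\frac{\partial f_1}{\partial z_2}$: the result is
\[
\frac{\partial f_1}{\partial z_1}\frac{\partial g_2}{\partial z_2}-\frac{\partial f_1}{\partial z_2}\frac{\partial g_2}{\partial z_1}=0,
\]
the PDE with a \emph{minus} sign, not the plus sign of the statement. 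Equivalently, rewriting your system in terms of $f_1$ gives $g_1\frac{\partial f_1}{\partial z_1}-f_2\frac{\partial g_2}{\partial z_1}=0$ as first equation, which differs from the paper's by the sign of the second term, and that sign propagates into the determinant. So the phrase ``the overall sign being fixed by those two relations'' is exactly where the argument breaks: the sign that actually comes out is the opposite one, and you cannot reach the proposition as stated without locating and repairing a sign upstream (in the specialization of the product proposition, or in how the remark trades barred for unbarred derivatives); asserting the target formula does not close this gap. A secondary weakness, which to be fair the paper shares, is the equivalence claim itself: vanishing of the determinant only guarantees that the linear system has \emph{some} nontrivial solution at each point, so ``running the substitutions in reverse'' does not by itself prove $(ii)\Rightarrow(i)$.
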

\begin{proof} Assume that $f_1, f_2,g_1, g_2$ are real, then: 
\begin{equation}\label{dag}
\left\{\begin{array}{l}
\displaystyle g_1(\frac{\partial
f_1}{\partial\overline z_1}+\frac{\partial f_2}{\partial z_2})+f_2(\frac{\partial g_1}{\partial z_2}-\frac{\partial g_2}{\partial\overline z_1})=0
\\ \\
\displaystyle g_1(\frac{\partial
f_1}{\partial\overline z_2}-\frac{\partial f_2}{\partial z_1})- f_2(\frac{\partial g_1}{\partial z_1}+\frac{\partial g_2}{\partial\overline z_2})=0
\end{array}\right.
\end{equation}
$f$ being w-hypermeromorphic satisfies:
\begin{equation}\label{dagdag}
\left\{\begin{array}{l}
\displaystyle\frac{\partial f_2}{\partial z_1}+\frac{\partial f_1}{\partial \overline z_2}=0;
\\ \\
\displaystyle\frac{\partial f_1}{\partial z_1}-\frac{\partial f_2}{\partial\overline z_2}=0
\end{array}\right.
\end{equation}
the same for $g$. Using \eqref{dagdag}, we get, from \eqref{dag}
$$
\left\{\begin{array}{l}
\displaystyle g_1\frac{\partial f_1}{\partial z_1}+f_2\frac{\partial g_2}{\partial z_1}=0
\\ \\
\displaystyle g_1\frac{\partial
f_1}{\partial z_2}-f_2\frac{\partial g_2}{\partial z_2}=0
\end{array}\right.
$$
To get $f_2$, $g_1$ not both 0, , we need the relation:
$$
\frac{\partial
f_1}{\partial z_1}\frac{\partial g_2}{\partial z_2}+\frac{\partial
f_1}{\partial z_2}\frac{\partial g_2}{\partial z_1}=0
$$ 
\end{proof}

\subsection{Definition} We will call {\it hypermeromorphic} the w-hypermeromorphic functions whose sum and product are w-hypermeromorphic. Their space is nonempty, since the product $1$ of a w-hypermeromorphic and its inverse is w-hypermeromorphic; moreover this space contains the right $\H$-algebra of the meromorphic functions.

\subsection{Hypermeromorphic functions whose two components are real}

Let $f,g$ be two hypermeromorphic functions on $U$. 

From Proposition \ref{P33}, they satisfy: 
$$
\frac{\partial f_2}{\partial z_1}+\frac{\partial f_1}{\partial \overline z_2}=0; 
\frac{\partial f_1}{\partial z_1}-\frac{\partial f_2}{\partial\overline z_2}=0
$$
$$
\frac{\partial g_2}{\partial z_1}+\frac{\partial g_1}{\partial \overline z_2}=0; 
\frac{\partial g_1}{\partial z_1}-\frac{\partial g_2}{\partial\overline z_2}=0
$$
The properties: the sum $h=f+g$ is hypermeromorphic, is equivalent to:

$h_1=A_1(z_2,\overline z_2); h_2=A_2(z_1,\overline z_1)$ and $A_1+A_2=B$, const.;

the product $f*g$ is hypermeremorphic, is equivalent to:

$$
\frac{\partial f_1}{\partial z_1}\frac{\partial g_2}{\partial z_2}+\frac{\partial
f_1}{\partial z_2}\frac{\partial g_2}{\partial z_1}=0
$$ 

\begin{prop}Let $f$ and $g$ be quaternionic functions such that their components be real. The following conditions are equivalent:
\begin{enumerate}
\item[$(i)$] $f, g$, their sum and their product are w-hypermeromorphic;
\item[$(ii)$] $f, g$ satisfy the following relations:

$$
\frac{\partial f_2}{\partial z_1}+\frac{\partial f_1}{\partial z_2}=0; 
\frac{\partial f_1}{\partial z_1}-\frac{\partial f_2}{\partial z_2}=0
$$
$$
\frac{\partial g_2}{\partial z_1}+\frac{\partial g_1}{\partial z_2}=0; 
\frac{\partial g_1}{\partial z_1}-\frac{\partial g_2}{\partial z_2}=0
$$

$$
\frac{\partial f_1}{\partial z_1}\frac{\partial g_1}{\partial z_1}-\frac{\partial f_1}{\partial z_2}\frac{\partial g_1}{\partial z_2}=0 
$$
\end{enumerate}
$f_1+g_1=A_1(z_2,\overline z_2); f_2+g_2=A_2(z_1,\overline z_1)$ where $A_1$ and $A_2$ are given and satisfy $A_1+A_2=B$, const.
\end{prop}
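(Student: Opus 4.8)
The plan is to recognize that condition $(i)$ is nothing but the conjunction of four separate w-hypermeromorphy requirements --- for $f$, for $g$, for the sum $h=f+g$, and for the product $f*g$ --- and that each of these has already been characterized, under the reality hypothesis on the components, by one of the preceding propositions. So I would not compute anything from scratch; instead I would assemble the equivalence by stacking those four characterizations and checking that their combined output is precisely the list of relations in $(ii)$, proving both implications simultaneously since every clause is an iff.

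First I would treat $f$ and $g$ individually. By Proposition \ref{P33}, a function with real components is w-hypermeromorphic exactly when $\partial_{z_1}f_2+\partial_{\overline z_2}f_1=0$ and $\partial_{z_1}f_1-\partial_{\overline z_2}f_2=0$, and likewise for $g$. To match the form written in $(ii)$, I would invoke the remark on real-valued functions of $(z,\overline z)$ to replace each $\overline z_2$-derivative of a real component by the corresponding $z_2$-derivative; this turns the Cauchy--Fueter-type system of Proposition \ref{P33} into the stated relations $\partial_{z_1}f_2+\partial_{z_2}f_1=0$ and $\partial_{z_1}f_1-\partial_{z_2}f_2=0$, together with the same system for $g$.

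Next I would bring in the sum and the product. Since $f$ and $g$ are w-hypermeromorphic with real components, the preceding proposition on sums of such functions gives that $h=f+g$ is w-hypermeromorphic if and only if $h_1=f_1+g_1$ depends only on $(z_2,\overline z_2)$, $h_2=f_2+g_2$ depends only on $(z_1,\overline z_1)$, and $A_1+A_2=B$ is constant; this is exactly the trailing displayed condition of $(ii)$. For the product, the preceding proposition on products of real-component w-hypermeromorphic functions gives that $f*g$ is w-hypermeromorphic iff $\partial_{z_1}f_1\,\partial_{z_2}g_2+\partial_{z_2}f_1\,\partial_{z_1}g_2=0$. Here the one genuine manipulation enters: substituting the $g$-relations $\partial_{z_2}g_2=\partial_{z_1}g_1$ and $\partial_{z_1}g_2=-\partial_{z_2}g_1$ (the transformed system for $g$) converts this product condition into $\partial_{z_1}f_1\,\partial_{z_1}g_1-\partial_{z_2}f_1\,\partial_{z_2}g_1=0$, the last scalar relation in $(ii)$. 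The converse direction runs the same substitution backwards: granted the $g$-system, the relation in $f_1,g_1$ is equivalent to the relation in $f_1,g_2$, so the product proposition reapplies.

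The step I expect to be most delicate is the interchange between $\overline z_2$- and $z_2$-derivatives needed to pass from the intrinsic form of the single-function conditions to the form displayed in $(ii)$; this rests entirely on the reality of the components and on the remark quoted above, and it must be applied consistently to $f$, to $g$, and inside the product relation (where it also mediates the bridge between the $g_2$-derivatives produced by the product proposition and the $g_1$-derivatives appearing in $(ii)$). Once that interchange is granted, the equivalence is pure bookkeeping: each clause of $(ii)$ is matched, in both directions, with exactly one of the four already-established w-hypermeromorphy criteria, and no new PDE computation is required.
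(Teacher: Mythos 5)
Your proposal is correct and is essentially the paper's own (largely implicit) argument: the paper states this proposition right after reciting, in the same subsection, the characterizations from Proposition \ref{P33} for $f$ and $g$, the sum proposition, and the real-component product proposition, which is exactly the assembly you perform. Your explicit substitution of the $g$-system ($\partial_{z_2}g_2=\partial_{z_1}g_1$, $\partial_{z_1}g_2=-\partial_{z_2}g_1$) to convert the product condition into the $g_1$-form of $(ii)$, and your use of the reality remark to trade $\overline z_2$-derivatives for $z_2$-derivatives, just fill in bookkeeping the paper leaves unwritten.
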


Altogether we have $10$ variables and $8$ relations.

\begin{prop}The set of hypermeromorphic functions whose two components are real is an $\R$-right algebra.
\end{prop}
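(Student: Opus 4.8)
The plan is to verify the three closure properties that upgrade the underlying set to an $\R$-right algebra: closure under right multiplication by real scalars, under addition, and under the quaternionic product $*$, all while remaining inside the class of hypermeromorphic functions with real components. Throughout I would exploit the fact that, for real components, the defining conditions assembled in the preceding proposition split into a \emph{linear} homogeneous part (the Cauchy--Riemann-type system) and a \emph{bilinear} part (the product relation), and treat each operation against the appropriate part.

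First, for right multiplication by $\alpha\in\R$, I would invoke Corollary \ref{Cor32}: if $f$ satisfies the inversion-hyperholomorphy conditions, so does $\alpha f$. The components of $\alpha f$ are $\alpha f_1,\alpha f_2$, still real, and because the relations of the previous proposition are homogeneous under $f\mapsto\alpha f$ (the linear system scales linearly, the bilinear product relation scales by $\alpha^2$, hence both are preserved), $\alpha f$ is again hypermeromorphic with real components. Since $\alpha\in\R$ is central, right multiplication by $\alpha$ also commutes with $*$, so the module action is compatible with the product.

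Second, for addition, the decisive structural observation is that the real-component w-hypermeromorphy conditions of Proposition \ref{P33} form a linear homogeneous system in $(f_1,f_2)$; hence the real-component w-hypermeromorphic functions already constitute an $\R$-vector space, and $f+g$ is w-hypermeromorphic with real components whenever $f,g$ are. It then remains to see that the sum stays \emph{hypermeromorphic}, i.e.\ that its further sums and products with members of the class are w-hypermeromorphic, which is exactly the content of the preceding characterization (the linear system for each summand together with $h_1=A_1(z_2,\overline z_2)$, $h_2=A_2(z_1,\overline z_1)$, $A_1+A_2$ constant); I would check these hypotheses are stable under the operation. For the product, I would first record that the quaternionic product of two functions with real components again has real components, since each component of $f*g$ is a real bilinear combination of $f_1,f_2,g_1,g_2$; hyperholomorphy of the product is then supplied by Corollary \ref{Proposition 2.3.3}, which states that ${\mathcal H}_{\R}$ is an $\R$-right algebra, and hyperholomorphy of the inverse of the product is governed by the real-component product proposition, whose bilinear relation $\frac{\partial f_1}{\partial z_1}\frac{\partial g_2}{\partial z_2}+\frac{\partial f_1}{\partial z_2}\frac{\partial g_2}{\partial z_1}=0$ holds inside the class. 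Assembling the three parts yields closure under $+$, under right multiplication by $\R$, and under $*$, hence the $\R$-right algebra structure.

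I expect the main obstacle to be the third step. Unlike addition, the product is controlled by a genuinely nonlinear (bilinear) relation, so closure under $*$ does not follow from the vector-space structure and must be checked directly against the explicit product condition; the delicate point is reconciling this nonlinear constraint with the recursive reading of ``hypermeromorphic'' (namely that the \emph{result} of the operation must again have all of its sums and products w-hypermeromorphic), which requires knowing that the bilinear relation is not an accidental coincidence but is forced by the linear defining equations already imposed on each factor.
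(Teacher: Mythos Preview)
The paper gives no proof of this proposition at all: it is simply asserted, immediately after the characterization that lists ``$10$ variables and $8$ relations.'' So there is no paper proof to compare against; the proposition is left as a bare claim, presumably meant to be read as a consequence of the preceding characterization.

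As for your proposal on its own merits: the architecture is right, and the inputs you cite (Corollary \ref{Cor32} for scalar multiples, the linearity of the system in Proposition \ref{P33} for sums, Corollary \ref{Proposition 2.3.3} for products inside ${\mathcal H}_{\R}$) are exactly the tools the paper has set up. But you have correctly located, and then not closed, the actual gap. The definition of ``hypermeromorphic'' in \S4.3 is recursive: it is not enough that $f+g$ and $f*g$ be w-hypermeromorphic; you must show they are again \emph{hypermeromorphic}, i.e.\ that \emph{their} sums and products with arbitrary members of the class remain w-hypermeromorphic. For addition this is governed by the rigid constraint of Proposition 4.2 ($h_1=A_1(z_2,\overline z_2)$, $h_2=A_2(z_1,\overline z_1)$, $A_1+A_2$ constant), and for multiplication by the bilinear relation $\partial_{z_1}f_1\,\partial_{z_2}g_2+\partial_{z_2}f_1\,\partial_{z_1}g_2=0$; neither of these is visibly stable under iterated sums and products, and your sentence ``I would check these hypotheses are stable under the operation'' is precisely where the argument has to happen and does not. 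Since the paper supplies nothing here either, the honest summary is that neither the paper nor your proposal contains a proof; you have a coherent plan whose hard step is flagged but not executed.
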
 

\begin{prop} {The set ${\mathcal M}$ of hypermeromorphic functions on $U$ is a subalgebra of the algebra of quaternionic functions.}
\end{prop}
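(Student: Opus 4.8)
The plan is to separate the formal algebra axioms from the genuine content. The associativity of $*$, the two distributive laws, and the compatibility of the right scalar action are automatic for \emph{any} subset of the algebra of almost everywhere defined $C^\infty$ functions $U\to\H$, so I would spend no effort on them; the entire statement reduces to showing that $\mathcal{M}$ is nonempty, contains a unit, and is stable under $+$, under $*$, and under right multiplication by constants.

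First I would dispose of the easy items. The constant $1=1+0\,\mathbf{j}$ is holomorphic, hence hyperholomorphic, and is its own inverse, so it is w-hypermeromorphic; more generally every constant quaternion $\lambda$ is hyperholomorphic with hyperholomorphic inverse, and the right $\H$-algebra of meromorphic functions is already contained in $\mathcal{M}$, as noted just after the definition. This also settles the right scalar action: for $\lambda\in\H$ the product $f\lambda=f*\lambda$ is the $*$-product of $f$ with the constant (hence hypermeromorphic) function $\lambda$, so stability under right multiplication by $\H$ is a special case of stability under $*$; over $\R$ one may instead invoke Corollary \ref{Cor32}.

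The substantive point is closure under $+$ and $*$, which is built into the definition of $\mathcal{M}$ but must be made precise because that definition is self-referential. I would read $\mathcal{M}$ as a \emph{maximal} family $\mathcal{S}$ of w-hypermeromorphic functions with the property that $f+g$ and $f*g$ are w-hypermeromorphic for all $f,g\in\mathcal{S}$. Such a maximal family exists by Zorn's lemma: the union of a chain of such families is again one, since any two of its elements already lie in a single member of the chain, where their sum and product are w-hypermeromorphic. For a maximal $\mathcal{M}$, given $f,g\in\mathcal{M}$ I would check that $\mathcal{M}\cup\{f+g\}$ and $\mathcal{M}\cup\{f*g\}$ are still families of the same kind, using the sum and product criteria established in the propositions above together with the hyperholomorphy system \eqref{Eq1}; maximality then forces $f+g,f*g\in\mathcal{M}$.

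The main obstacle is exactly this last bootstrapping step: one must show that the sum and product of two elements of $\mathcal{M}$ are not merely w-hypermeromorphic, but continue to have w-hypermeromorphic sums and products with every other element of $\mathcal{M}$. Concretely this amounts to verifying that the PDE systems characterizing w-hypermeromorphy of a sum and of a product (Proposition \ref{P31} and the sum/product propositions) are preserved when one forms further sums and products; the computation is routine but bookkeeping-heavy, both because $*$ is noncommutative and because the modulus $|h|$ enters $h^{-1}$ nonlinearly. Once this stability is established, the maximality formulation closes $\mathcal{M}$ under both operations, and together with the unit, the right $\H$-action, and the inherited axioms this exhibits $\mathcal{M}$ as a subalgebra of the algebra of quaternionic functions.
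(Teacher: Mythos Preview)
The paper gives no proof of this proposition at all; it is simply stated, evidently as an immediate consequence of Definition~4.3, which already stipulates that hypermeromorphic functions are those w-hypermeromorphic functions ``whose sum and product are w-hypermeromorphic''. Read charitably, the paper is \emph{defining} $\mathcal{M}$ to be a class closed under $+$ and $*$, so the subalgebra assertion is essentially tautological once one throws in the constants and the inherited axioms you list.

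Your proposal is considerably more careful than the paper and correctly flags that Definition~4.3 is self-referential. Your fix via a Zorn-maximal family $\mathcal{M}$ with \emph{pairwise} w-hypermeromorphic sums and products is a reasonable way to make the definition precise, but it creates exactly the difficulty you name and then do not resolve. To conclude $f+g\in\mathcal{M}$ you must show that $(f+g)+h$ and $(f+g)*h$ are w-hypermeromorphic for every $h\in\mathcal{M}$. Associativity gives $(f+g)+h=f+(g+h)$, but $g+h$ is only known to be w-hypermeromorphic, not to lie in $\mathcal{M}$, so the defining property of $\mathcal{M}$ does not apply to the pair $(f,g+h)$. The PDE system in Proposition~\ref{P31} is genuinely nonlinear in $(f_1,f_2)$, and the sum/product criteria in Section~4 do not exhibit any additivity that would let you pass from ``$f+h$, $g+h$, $f+g$ are w-hypermeromorphic'' to ``$(f+g)+h$ is w-hypermeromorphic''. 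For products the situation is worse, since $*$ mixes components and conjugates. Calling this step ``routine but bookkeeping-heavy'' is not justified; as written, the bootstrapping closure fails to go through, and your maximal $\mathcal{M}$ is not shown to be closed under $+$ or $*$.

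In short: under the paper's loose reading of the definition the proposition is immediate (and unproved in the paper for that reason); under your sharper reading the proposition becomes nontrivial, and your outline leaves the decisive step open.
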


\begin{prop} The set ${\mathcal A}$ of hypermeromorphic functions on $U$ is a ``field" with only associativity of the multiplication. \end{prop}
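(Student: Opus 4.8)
The plan is to check the axioms of a skew field one by one, taking the additive group structure, associativity, and distributivity for free from the ambient algebra of quaternionic functions, and reserving the real work for the existence of multiplicative inverses inside $\mathcal{A}$.

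First I would recall that, by the preceding proposition, $\mathcal{A}$ (the set of hypermeromorphic functions on $U$) is a subalgebra of the algebra of all quaternionic functions on $U$. Pointwise the values lie in the skew field $\H$, so addition is commutative and associative, multiplication $*$ is associative and two-sided distributive over addition, and every such identity transfers to $\mathcal{A}$ unchanged. Being a subalgebra, $\mathcal{A}$ is in particular an additive subgroup, so $0\in\mathcal{A}$ and $-f\in\mathcal{A}$ for every $f$; hence $(\mathcal{A},+)$ is an abelian group. The unit $1=1+0{\bf j}$ also lies in $\mathcal{A}$, since the constant $1$ is holomorphic, hence hyperholomorphic with holomorphic (indeed equal) inverse, hence hypermeromorphic. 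Because $*$ is inherited from the noncommutative $\H$, commutativity of the product fails, which is exactly the restriction meant by a ``field with only associativity of the multiplication.''

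The one genuinely new point is that every nonzero $f\in\mathcal{A}$ admits a multiplicative inverse again lying in $\mathcal{A}$. For $f=f_1+f_2{\bf j}$ the algebraic inverse $f^{-1}=\vert f\vert^{-1}\overline f$ is defined almost everywhere on $U$. Since $f$ is w-hypermeromorphic, $f^{-1}$ is hyperholomorphic almost everywhere; and because inversion is an involution, $(f^{-1})^{-1}=f$ is hyperholomorphic almost everywhere as well, so $f^{-1}$ is itself w-hypermeromorphic. To promote $f^{-1}$ to a member of $\mathcal{A}$ I would verify that it is hypermeromorphic, i.e. that $f^{-1}+g$ and $f^{-1}*g$ stay w-hypermeromorphic for every $g\in\mathcal{A}$; here I would substitute $f^{-1}$ into the PDE systems characterizing w-hypermeromorphy of a sum and of a product, using the identities $f*f^{-1}=f^{-1}*f=1$ and the closure already supplied by the subalgebra structure.

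The hard part is precisely this last step: upgrading $f^{-1}$ from merely w-hypermeromorphic to a genuine element of the closed class $\mathcal{A}$. The defining closure condition for hypermeromorphy is not visibly symmetric under inversion, so it cannot simply be quoted; the argument has to exploit the involutive symmetry $(f^{-1})^{-1}=f$ together with the fact that $f$ already meets the closure conditions against all of $\mathcal{A}$. Once inverses are shown to remain in $\mathcal{A}$, the nonzero hypermeromorphic functions form a group under $*$, and all field axioms except commutativity of multiplication hold, which proves the proposition.
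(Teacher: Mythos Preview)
The paper offers no proof of this proposition at all; it is asserted bare, as are the two preceding propositions. So there is no ``paper's own proof'' to compare against, and your outline must be judged on its own.

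Your reduction is the right one: once $\mathcal{A}$ is a subalgebra (the preceding proposition), the ring axioms are inherited pointwise from $\H$, the unit $1$ is holomorphic hence in $\mathcal{A}$, and the only new content is closure under inversion. You are also correct that w-hypermeromorphy is symmetric under $f\mapsto f^{-1}$, so the inverse of a nonzero element of $\mathcal{A}$ is at least w-hypermeromorphic.

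The gap you yourself label ``the hard part'' is real, and you do not close it. Hypermeromorphy (Definition in \S4.3) demands that sums and products with \emph{every} other hypermeromorphic function remain w-hypermeromorphic. Nothing in the PDE systems of Propositions~4.1 and~4.3 is manifestly stable under replacing $f$ by $f^{-1}=\vert f\vert^{-1}\overline{f}$; the substitution you propose would introduce $\vert f\vert^{-1}$ and $\overline f_1,\,-f_2$ in place of $f_1,f_2$, and the resulting system does not visibly reduce to the hypotheses already satisfied by $f$. Nor does the identity $f*f^{-1}=1$ help: closure of $\mathcal{A}$ under $*$ says nothing about membership of $f^{-1}$, which is exactly what is at stake. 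In short, your proposal correctly isolates the obstruction but does not remove it---and, to be fair, neither does the paper, which simply asserts the result.
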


\subsection{Meromorphic functions and Hypermeromorphic functions} 

\subsubsection{Motivation.}

Meromorphic functions satisfy all PDE satisfied by hypermeromorphic functions. 

Meromorphic functions have, classically, behavior on their domain of definition $U$ compatible with the behavior of hypermeromorphic functions on $U$. In the same way the sum and the product of a hypermeromorphic function and a meromorphic function is a hypermeromorphic function 

\subsubsection{Conclusion.} The $\H$-algebra and the field of meromorphic functions are substructures of the set of hypermeromorphic functions. 

In particular the union of the set of meromorphic functions and the set of hypermeromorphic functions whose components are real give a large number of examples of hypermeromorphic functions.

\subsection{Acknowledgement}Guy Roos pointed out to me that quaternions whose components are real are complex numbers.

\end{document}